\documentclass[11pt,a4paper,twoside,reqno]{amsart}
\usepackage[
a4paper,
left=4cm, right=4cm,
top=4cm,  bottom=4cm,
includeheadfoot,   
headheight=14pt,   
footskip=18pt      
]{geometry}
\usepackage[all]{xy}
\usepackage{amsmath,amssymb,amsfonts,amsthm}
\usepackage[toc,page]{appendix}
\usepackage{verbatim} 
\usepackage{graphicx} 
\usepackage{amsfonts}
\usepackage{amsthm}
\usepackage{amsmath}
\usepackage{stmaryrd}
\usepackage{amscd}
\usepackage{mathtools}
\usepackage[latin2]{inputenc}
\usepackage{t1enc}
\usepackage[mathscr]{eucal}
\usepackage{indentfirst}
\usepackage{graphicx}
\usepackage{graphics}
\usepackage{pict2e}
\usepackage{epic}
\usepackage{faktor}
\usepackage{xfrac}
\numberwithin{equation}{section}
\usepackage{epstopdf}
\usepackage{hyperref}
\usepackage{lipsum}
\usepackage{tikz-cd}
\usepackage{pict2e}
\usepackage{epic}
\usepackage{float}
\usepackage{caption}
\usepackage{amssymb}
\numberwithin{equation}{section}
\usepackage{epstopdf}
\usepackage{comment}
\usepackage{tikz-cd}
\usepackage{microtype}

\usepackage{xcolor}
\usepackage{enumitem}
\usepackage[nameinlink,noabbrev]{cleveref}

\hypersetup{
colorlinks=true,
linkcolor=blue,
citecolor=blue,
urlcolor=blue,
pdftitle={Prescribed stable norm on tori of dimension at least three, and a conjecture of Bangert},
pdfauthor={Hoan Nguyen}
}
\theoremstyle{plain}
\newtheorem{Th}{Theorem}[section]

\theoremstyle{definition}

\newtheorem{?}[Th]{Problem}

\theoremstyle{plain}
\newtheorem{theorem}[Th]{Theorem}
\newtheorem{lemma}[Th]{Lemma}
\newtheorem{corollary}[Th]{Corollary}
\newtheorem{proposition}[Th]{Proposition}
\newtheorem{question}[Th]{Question}
\theoremstyle{definition}

\theoremstyle{remark}
\newtheorem{remark}[Th]{Remark}

\newcommand{\T}{\mathbb T}
\newcommand{\R}{\mathbb R}
\newcommand{\Z}{\mathbb Z}
\newcommand{\Sc}{\operatorname{Sc}}
\newcommand{\PD}{\operatorname{PD}}
\newcommand{\Vol}{\operatorname{Vol}}
\newcommand{\Mass}{\mathbf M}

\newcommand{\st}{\mathrm{st}}

\title[Codimension-one stable norm]{Minimal Foliations, Codimension-One Stable Norms, and a Question of Bangert}
\author{Hoan Nguyen}
\address{Department of Mathematics, The University of Chicago, Chicago, IL 60615}
\email{nth@uchicago.edu}
\date{}

\numberwithin{equation}{section}

\begin{document}

\begin{abstract}
We compute the codimension-one stable norm for a natural class of
cohomogeneity-one metrics on tori.  In every dimension $n\ge3$, the
formula yields smooth nonflat metrics for which each primitive
codimension-one homology class is represented by a foliation of
calibrated tori, giving a negative answer to a question of Bangert.
On $\mathbb T^3$, we construct an infinite-dimensional family of
nonflat metrics whose codimension-one stable norm agrees exactly with
that of the unit cubic flat torus and whose total volume is fixed.
An explicit two-parameter subfamily contains pairwise non-isometric metrics.  These examples
also show that the Euclidean-stable-norm-and-volume data are not locally injective
near the cubic flat metric.
\end{abstract}

\maketitle

\section{Introduction}

In his contribution to the 1994 International Congress of Mathematicians, Bangert asked whether an abundance of codimension-one minimizing hypersurfaces characterizes flat Riemannian tori \cite{Bangert1995}. More precisely, he posed the following question.
\begin{?}
    Suppose that $(\mathbb T^n, g)$ is a Riemannian torus such that for every primitive class $r\in H_{n-1}(\mathbb T^n, \mathbb Z)$, there exists a foliation of $\mathbb T^n$ by $g$-minimal $(n-1)$-tori in the class $r$. Must $g$ be a flat metric?
\end{?}
The motivation comes from the geodesic case on $\mathbb T^2$, where the answer is positive. Suppose that for every primitive class in $H_1(\mathbb T^2, \mathbb Z)$, the torus is foliated by closed geodesics representing that class. Each such foliation lifts to a foliation of $\mathbb R^2$ by globally minimizing geodesics. Innami \cite{Innami1986} proved that this condition forces the metric to be flat. This is closely related to Hopf's theorem \cite{Hopf1948} that a Riemannian two-torus without conjugate points is flat. The corresponding rigidity theorem in every dimension
was proved by Burago and Ivanov \cite{BuragoIvanov1994}.

For hypersurfaces in higher-dimensional $\mathbb T^n$, minimal hypersurfaces in such a foliation are still homologically area-minimizing. This is because any such foliation is calibrated: if $\nu$ is its global unit normal, then
\[
  d(\iota_\nu dV_g)=(\operatorname{div}_g\nu)dV_g=0,
\]
hence $\iota_\nu dV_g$ is a calibration form on every leaf. Bangert's
hypothesis can therefore be stated equivalently using foliations by
homologically area-minimizing tori.

Moser initiated a higher-dimensional Aubry-Mather theory for periodic
variational problems and proved a stability theorem for certain minimal
foliations \cite{Moser1986,Moser1988}.  Bangert developed the corresponding
structure theory of non-selfintersecting minimizers, introducing secondary
invariants and describing the resulting foliations and laminations with
gaps on the torus \cite{Bangert1989}.

The natural invariant associated with these minimizing hypersurfaces is
the codimension-one stable norm.  For a closed oriented Riemannian
$n$-manifold $(M,g)$ and a class
$ r\in H_{n-1}(M,\R)$, 
its stable norm is defined by the minimum mass of a real $(n-1)$-cycle representing $r$.
This mass norm on real homology goes back to Federer
\cite{Federer1974}.  In codimension-one, it may be regarded as the
effective or homogenized area functional associated with the Riemannian
metric.  A calibrated hypersurface realizes the stable norm of its
homology class.

In a recent work \cite{Nguyen2026}, Nguyen considers, for an integer class $r\in H_{n-1}(\mathbb T^n, \mathbb Z)$, the min-max excess quantity:
\[\Delta W_r=\omega(r)-S(r),\]
where $\omega(r)$ is the Almgren-Pitts width defined using sweep-out inside $r$ and $S(r)$ is the stable norm. He proves that for a totally irrational class $\alpha$, $\Delta W_r \to 0$ as $r/S(r)\to \alpha/S(\alpha)$ if and only if  there is a foliation by minimizing hypersurfaces in the direction $\alpha$. Throughout this paper, we will use the more standard notation, $\|r\|_{\st}$, to denote the stable norm.

The regularity of the stable norm is closely connected with the geometry of
minimizing laminations. Auer and Bangert \cite{AuerBangert2006} proved that, at a nonzero class
$\alpha$, the restriction of the codimension-one stable norm to the
smallest rational subspace containing $\alpha$ is differentiable. Junginger-Gestrich related differentiability in
the remaining directions to the absence of gaps in the corresponding
minimizing lamination \cite{JungingerGestrich2007}.  Combining these
results with a compactness argument for rationally dependent classes, we prove in Appendix \ref{app:differentiability} that, on a torus, Bangert's condition is equivalent to the fact that the stable norm on $H_{n-1}(\T^n,\R)$ is differentiable at every nonzero class.
Thus Bangert's problem can also be formulated as a rigidity question for
the stable norm:
\[
\|\cdot\|_{\st,g}
\text{ is differentiable on }
H_{n-1}(\T^n,\R)\setminus\{0\}
\quad\stackrel{?}{\Longrightarrow}\quad
g\text{ is flat}.
\]

Bangert \cite{Bangert1994} constructed a metric on $\mathbb T^n$ for which all but one
primitive codimension-one homology class admit foliations by closed
minimal hypersurfaces (see also
\cite{Nguyen2026}).  The construction below is motivated in part by this
example and gives a negative answer to Bangert's question.
\subsection{Main results}

Consider for $n\geq 3$ the torus
\[
  \T^n=\T^{n-1}_x\times S^1_t
\]
with local coordinates $(x_1, \ldots, x_{n-1}, t)$ and consider Riemannian metrics of the form
\begin{equation}\label{eq:generalmetric-intro}
  g=q(t)^2dt^2+h_t,
  \qquad \det h_t=1,
\end{equation}
where $q:S^1\to(0,\infty)$ is smooth and $h_t$ is a smooth periodic family
of translationally invariant metrics on $\T^{n-1}$.  Geometrically,
$t\mapsto h_t$ is a loop of marked flat, unit-volume $(n-1)$-tori.  We will see that the
determinant condition preserves the volume of the horizontal fibers, making them minimal with respect to the metric $g$.

\begin{theorem}
\label{thm:generalformula-intro}
Let $g$ be a metric of the form \eqref{eq:generalmetric-intro}.  For
$(z,c)\in\R^{n-1}\times \mathbb R$,
consider the homology class $r_{z,c}\in H_{n-1}(\T^n,\R)$ defined by the Poincar\'e dual: 
\[
  \PD(r_{z,c})=\sum_{i=1}^{n-1}z_i[dx^i]+c[dt].
\]
Its stable norm is given by
\begin{equation}\label{eq:mainformula-intro}
  \|r_{z,c}\|_{\st,g}
  =\sqrt{\mathcal A_g(z)^2+c^2}, \quad \text{where} \quad \mathcal A_g(z)
  :=\int_0^1q(t)\sqrt{z^Th_t^{-1}z}\,dt.
\end{equation}
Moreover, for every primitive $(z,c)\in\Z^{n-1}\times\Z$, the class
$r_{z,c}$ is represented by a smooth fibration of $\T^n$ by connected,
embedded, calibrated $(n-1)$-tori.  For $z=0$ these are the horizontal
fibers; for $z\ne0$ they are constant-angle suspensions constructed in
Section \ref{sec:formula}.
\end{theorem}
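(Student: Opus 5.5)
Both bounds come from calibration: the upper bound from explicit representatives of the class, the lower bound from a closed comass-one $(n-1)$-form in the dual cohomology class. Write $\mathrm{vol}=q\,dt\wedge dx^1\wedge\cdots\wedge dx^{n-1}$, which uses $\det h_t=1$. By Hodge-star duality it is enough to work with closed $1$-forms. For a closed $1$-form $\theta$ with $|\theta|_g\le 1$, the form $\star\theta=\iota_{\theta^\sharp}\mathrm{vol}$ is closed exactly when $\operatorname{div}_g\theta^\sharp=0$. In that case it has comass $\le 1$, and Stokes gives $\|r\|_{\st}\ge\langle[\star\theta],r\rangle$. Dually, the stable norm is the support function of the set of cohomology classes of such divergence-free forms. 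It is therefore cleanest to pair $r_{z,c}$ with a $1$-form rather than an $(n-1)$-form.

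\textbf{Lower bound.} Take $1$-forms depending on $t$ only:
\[
\theta=\lambda(t)^Tdx+\mu(t)\,dt .
\]
Closedness forces $\lambda$ to be constant. The vector field $\theta^\sharp$ has components $h_t^{-1}\lambda$ horizontally and $\mu/q^2$ vertically. Since $\mathrm{vol}$ has density $q$, the divergence is $q^{-1}\partial_t(\mu/q)$. So divergence-free means $\mu=c_0q$, and $\theta=\lambda^Tdx+c_0q\,dt$.

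This is not yet enough: a constant $\lambda$ cannot adapt to $t$. The fix is to flip the roles. Use the closed $(n-1)$-form $\phi=\star\theta$ for a non-closed $\theta$ whose star is closed, i.e.\ any divergence-free unit field $V$, and take $\phi=\iota_V\mathrm{vol}$. Choose
\[
V=a(t)\,\frac{h_t^{-1}z}{\sqrt{z^Th_t^{-1}z}}+b(t)\,q^{-1}\partial_t,\qquad a^2+b^2=1 .
\]
The horizontal part has $t$-dependent coefficients but is $x$-independent, so it is divergence-free. The vertical part is divergence-free iff $b$ is constant. So fix constants $b$ and $a=\sqrt{1-b^2}$. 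Its $g$-norm is $1$.

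The cohomology class of $\iota_V\mathrm{vol}$ pairs with $r_{z,c}$ to give, after integrating over $\T^n$ via Poincaré duality,
\[
\int_{\T^n}\PD(r)\wedge\iota_V\mathrm{vol}=\int (z^Tdx+c\,dt)(V)\,\mathrm{vol}=a\,\mathcal A_g(z)+bc ,
\]
with signs to be fixed. Maximizing over $(a,b)$ on the unit circle gives $\sqrt{\mathcal A_g(z)^2+c^2}$.

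\textbf{Upper bound and foliation.} For primitive integral $(z,c)$ with $z\ne0$, construct leaves that are graphs transverse to the unit normal field $V$ above with the optimal $(a,b)$. They are the level sets of a function $F=z^Tx+f(t)$ with $dF\parallel V^\flat$. Computing,
\[
V^\flat=a\,\frac{z^T dx}{\sqrt{z^Th_t^{-1}z}}+b\,q\,dt ,
\]
so $dF=\rho(t)V^\flat$ with $\rho=\sqrt{z^Th_t^{-1}z}/a$. This requires $f'(t)=\rho\,b\,q=(b/a)\,q\sqrt{z^Th_t^{-1}z}$. Periodicity mod $\Z$ of $F$ requires $f(1)-f(0)=c$, that is $(b/a)\mathcal A_g(z)=c$. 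This is exactly the optimal angle condition: the "constant-angle suspension."

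The level sets of $F$ mod $1$ then give a fibration over $S^1$. Its fibers are connected tori because $(z,c)$ is primitive, and they are calibrated by $\iota_V\mathrm{vol}$ since $V$ is their unit normal. Hence their total mass equals the pairing, which gives the upper bound. For $z=0$ the leaves are the horizontal fibers, each of area $1$ because $\det h_t=1$. Real classes then follow by homogeneity and continuity, since both sides are norms agreeing on rational rays.

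\textbf{Main obstacle.} The subtle point is keeping the Poincaré duality signs and normalization consistent in the pairing computation. The other points to check are that the levels of $F$ are connected embedded tori (a primitivity argument) and that $V$ is genuinely divergence-free with $t$-dependent horizontal coefficients, which holds because $\mathrm{vol}$ has $x$-independent density.
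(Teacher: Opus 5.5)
Your proposal is correct and is essentially the paper's proof: at the optimal angle $(a,b)\propto(\mathcal A_g(z),c)$ your field $V$ is exactly the unit normal $\nu_{z,c}$ of \eqref{eq:unitnormal}, and your $F$ is the paper's $\widetilde u_{z,c}$ from \eqref{eq:utilde}. The remaining steps you list (divergence-free check, calibration, connectedness from primitivity, extension by homogeneity and continuity) are the ones the paper carries out. There are two minor variations. First, you get the lower bound for every real class at once by maximizing $a\,\mathcal A_g(z)+bc$ over the angle. Second, you read off the leaf area from the calibration pairing $\int_{\T^n}(z^Tdx+c\,dt)(V)\,dV_g$ instead of the coarea formula; this is the same integral, since $|du_{z,c}|_g=du_{z,c}(\nu_{z,c})$.
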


The quantity $\mathcal A_g(z)$ is the area of the canonical vertical
hypersurface obtained by sweeping the flat subtorus dual to $z$ once around
the base circle. 

The theorem has the following immediate consequence.

\begin{corollary}
\label{cor:bangert-intro}
For every $n\ge3$ there exist smooth nonflat metrics on $\T^n$ for which
every primitive class in $H_{n-1}(\T^n,\Z)$ admits a foliation by connected,
homologically area-minimizing $(n-1)$-tori.  Consequently, the answer to Bangert's question is negative in every
dimension $n\ge3$.
\end{corollary}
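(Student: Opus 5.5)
The corollary follows from Theorem~\ref{thm:generalformula-intro} once we exhibit a nonflat metric of the form \eqref{eq:generalmetric-intro}. The theorem itself imposes no restriction on $q$ or on the loop $t\mapsto h_t$ beyond smoothness, periodicity and $\det h_t=1$. The plan has three steps.

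\emph{Step 1: choose a nonconstant loop.} Take $q\equiv1$ and a nonconstant smooth periodic family of constant-coefficient metrics $h_t$ on $\T^{n-1}$ with $\det h_t=1$. Concretely, set
\[
h_t=\operatorname{diag}\bigl(e^{\phi(t)},e^{-\phi(t)},1,\dots,1\bigr),
\]
where $\phi$ is a nonconstant smooth $1$-periodic function, for example $\phi(t)=\epsilon\sin(2\pi t)$. This requires $n-1\ge2$, which is exactly the hypothesis $n\ge3$. The resulting metric is
\[
g=dt^2+e^{\phi}dx_1^2+e^{-\phi}dx_2^2+\sum_{i\ge3}dx_i^2,
\]
a smooth metric on $\T^n$.

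\emph{Step 2: apply the theorem.} Every primitive class in $H_{n-1}(\T^n,\Z)$ is $r_{z,c}$ for some primitive $(z,c)\in\Z^{n-1}\times\Z$. Theorem~\ref{thm:generalformula-intro} then supplies a fibration of $\T^n$ by connected, embedded, calibrated $(n-1)$-tori in that class. Calibrated hypersurfaces are homologically area-minimizing, by the calibration argument recalled in the introduction, and are in particular minimal. So Bangert's hypothesis holds for $g$.

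\emph{Step 3: show $g$ is not flat.} The metric is a warped product $dt^2+\sum_i f_i(t)^2dx_i^2$ with $f_1=e^{\phi/2}$ and $f_2=e^{-\phi/2}$. For such a metric the sectional curvature of the plane spanned by $\partial_t$ and $\partial_{x_i}$ is $-f_i''/f_i$. Flatness would force $f_1''=0$, so $f_1$ would be affine; being periodic, it would be constant, and hence $\phi$ would be constant, contradicting the choice in Step~1. Hence $g$ is nonflat. Alternatively, one can compute directly that $K(\partial_t,\partial_{x_1})$ has nonzero average or nonzero value at some point.

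Steps 1 and 2 are formal once the theorem is granted, so the only real point is Step 3, and the warped-product curvature formula handles it. An alternative check is to note that a flat metric has a flat-torus stable norm, which is a Euclidean norm. One could then verify that the formula \eqref{eq:mainformula-intro}, with $\mathcal A_g(z)=\int_0^1(e^{-\phi}z_1^2+e^{\phi}z_2^2+\cdots)^{1/2}\,dt$, is not of the form $\sqrt{z^TAz}$. This is less direct, since it requires separating that expression from a quadratic form, so I would use the curvature computation.
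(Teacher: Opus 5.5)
Your proof is correct. Its skeleton matches the paper's: apply Theorem~\ref{thm:generalformula-intro} to a nonconstant unit-determinant loop, then check nonflatness by a curvature computation. The only real difference is in Step~3.

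The paper fixes no example. It takes an arbitrary $q$ and any nonconstant loop $h_t$. The horizontal slices are intrinsically flat, with shape operator $B(t)=\frac{1}{2q(t)}h_t^{-1}\dot h_t$. This operator is trace-free because $\det h_t\equiv1$, and it is nonzero at some $t_0$, so it has principal curvatures $\kappa_+>0>\kappa_-$. The Gauss equation then gives $K_g(E_+,E_-)=-\kappa_+\kappa_->0$ on a horizontal plane.

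That argument is pointwise and uses no periodicity. It also needs no fixed diagonalizing frame. This is why the paper can reuse it for the rotating loop $h_\lambda(t)=R_{2\pi t}^TD_\lambda R_{2\pi t}$ in Section~\ref{sec:cubic}, where $g$ is not a multiply warped product.

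Your route uses the mixed plane $(\partial_t,\partial_{x_1})$, where $K=-f_1''/f_1$, together with the global fact that a periodic affine function is constant. This is fully adequate for the existence claim and has the merit of being explicit. Within your own setup, the horizontal plane gives the paper's conclusion at once:
\[
K(\partial_{x_1},\partial_{x_2})=-\frac{f_1'f_2'}{f_1f_2}=\frac{(\phi')^2}{4}.
\]
This is exactly $-\kappa_+\kappa_-$ for your loop, and it is positive wherever $\phi'\neq0$.

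Your decision not to rely on the stable-norm test is also sound. Theorem~\ref{thm:general-q-cubic} exhibits nonflat metrics of this very form with exactly Euclidean stable norm. So that test can only succeed after a separate, example-specific argument that the norm is not quadratic.
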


 In his Scalar curvature notes \cite{Gromov2019}, Gromov posed a $C^0$-stability question for the rigidity of scalar curvature on $\mathbb T^n$:
given smooth metrics $g_i$ on $\mathbb T^n$ satisfying
\[
\operatorname{Sc}(g_i)\geq -\varepsilon_i,
\qquad
\varepsilon_i\longrightarrow0,
\]
and converging uniformly to a continuous Riemannian metric $g$, he asked
whether $(\mathbb T^n,g)$ must be isometric to a flat torus.  His no-gap argument yields, for every
primitive codimension-one class, a foliation by $g$-minimal
hypersurfaces, but does not establish that the leaves are totally
geodesic.  Thus an affirmative answer to Bangert's question would have
implied flatness of the limit.  Corollary~\ref{cor:bangert-intro} shows
that the foliation conclusion alone is insufficient.  The corresponding
$C^0$ rigidity problem was subsequently resolved by Burkhardt-Guim
using regularizing Ricci flow \cite{BurkhardtGuim2019}.

One could ask the following substantially stronger rigidity question: whether the codimension-one stable norm determines the metric. Without further restriction, the answer is negative, as we can increase the metric inside a compact set where no minimizing hypersurface intersects, without changing the set of minimizing hypersurfaces. This is not possible if the underlying metric is flat: any perturbation of the metric inside a compact set will change the minimizers passing through there. We therefore ask:

\begin{?}
Let $g_0$ be a
flat metric on the marked torus $\T^n=\R^n/\Z^n$.  If
\begin{equation}\label{eq:stronger-rigidity-question}
  \|\cdot\|_{\st,g}=\|\cdot\|_{\st,g_0}
  \qquad\text{on }H_{n-1}(\T^n,\R),
\end{equation}
must $g$ itself be flat?  
\end{?}

When $n=2$, this is proved by Bangert in \cite{Bangert1994}. Our second result shows that when $n=3$, the codimension-one stable norm alone remains nonrigid even when its entire unit ball is exactly Euclidean.

\begin{theorem}
\label{thm:cubic-intro}
There exists a smooth two-parameter family of nonflat metrics
\[
  \{g_{\lambda,s}:\lambda>1,\ 0\le s<1\}
\]
on $\T^3$ such that
\[
  \|r_{a,b,c}\|_{\st,g_{\lambda,s}}=\sqrt{a^2+b^2+c^2}
\]
for every $(a,b,c)\in H_2(\T^3,\R)\cong\R^3$.  Thus every
$g_{\lambda,s}$ has exactly the codimension-one stable norm of the unit
cubic flat torus.  Moreover, the metrics
$g_{\lambda,s}$ are pairwise non-isometric on $(\lambda, s)\in (1, \infty)\times [0,1)$.
\end{theorem}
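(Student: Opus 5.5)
We will use Theorem~\ref{thm:generalformula-intro} with $n=3$. The task then becomes finding $q(t)>0$ and a loop $h_t$ of unit-determinant positive definite $2\times2$ matrices such that
\[
\mathcal A_g(z)=\int_0^1 q(t)\sqrt{z^Th_t^{-1}z}\,dt=|z|\quad\text{for all }z\in\R^2 .
\]
With this, the formula $\sqrt{\mathcal A_g(z)^2+c^2}$ becomes $\sqrt{a^2+b^2+c^2}$ once $z=(a,b)$ is identified with the class. Each $h_t^{-1}$ lies in the hyperbolic plane $SL(2)/SO(2)$. For a unit vector $z=(\cos\phi,\sin\phi)$, the integrand $\sqrt{z^Th_t^{-1}z}$ depends on $\phi$ only through the first Fourier mode of $2\phi$ if we parametrize
\[
h_t^{-1}=R_{\theta(t)}\operatorname{diag}(\lambda_t,\lambda_t^{-1})R_{\theta(t)}^{-1}.
\]
Then $z^Th_t^{-1}z=\lambda\cos^2(\phi-\theta)+\lambda^{-1}\sin^2(\phi-\theta)$.

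The simplest construction that averages out the anisotropy is to take $\lambda_t\equiv\lambda>1$ constant and let the eigenframe rotate uniformly, $\theta(t)=\pi t$. This is a loop, since $R_\pi$ acts trivially on quadratic forms. Then
\[
\int_0^1\sqrt{z^Th_t^{-1}z}\,dt=|z|\,\frac1\pi\int_0^\pi\sqrt{\lambda\cos^2u+\lambda^{-1}\sin^2u}\,du=:|z|\,E(\lambda),
\]
which is independent of the direction of $z$. With the constant choice $q\equiv1/E(\lambda)$ we get $\mathcal A_g(z)=|z|$ exactly.

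To obtain the second parameter $s$, we keep rotational symmetry by allowing a nonuniform speed $\theta(t)=\pi\,\sigma_s(t)$, where $\sigma_s$ is a diffeomorphism of $[0,1]$ with $\sigma_0=\mathrm{id}$, for instance $\sigma_s(t)=t+\frac{s}{2\pi}\sin 2\pi t$, which is monotone for $s<1$. We compensate with $q(t)=\sigma_s'(t)/E(\lambda)$, which is positive precisely because $s<1$. The change of variables $u=\pi\sigma_s(t)$ shows that $\mathcal A_g$ is unchanged, and $q$ and $h_t$ are smooth and periodic.

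For nonflatness, we compute the curvature of $q^2dt^2+h_t$. Its second fundamental form on the horizontal fibers is $\tfrac1{2q}\partial_t h_t$, which is nonzero because the eigenframe rotates and $\lambda\ne1$. By the Gauss equation with flat fibers, the fiber sectional curvature equals $-\det(\text{II})\ne0$, up to sign conventions, so the metric is not flat.

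For pairwise non-isometry, we use isometry invariants. The fibers $\{t\}\times\T^2$ are totally geodesic nowhere, and $|\text{II}|^2$ is a function of $t$ whose range is $\propto \sigma_s'(t)^2/q(t)^2\cdot(\lambda-\lambda^{-1})^2$. The problem is that this becomes constant after the compensation, so it is not distinguishing. We therefore instead use the scalar curvature or $|\mathrm{Rm}|$ as a function on $M$. Computing the $t$-derivative terms, which involve $q'/q$ and hence $\sigma_s''$, should make the range (max and min) of the scalar curvature depend on $s$. At the same time, the mean value or the $s=0$ value should pin down $\lambda$. The main obstacle is exactly this step. We need invariants (for example $\max\Sc$ and $\min\Sc$, or the length of the $t$-circle $\int q$ together with a curvature norm) that provably separate the pairs $(\lambda,s)$. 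We also have to argue that any isometry preserves the relevant structure; the scalar-curvature extrema avoid this. If the explicit $\sigma_s$ makes the computation messy, we would choose the reparametrization so that $\Sc$ is an explicit monotone function of $s$ and $\lambda$.
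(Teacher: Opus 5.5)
There is a genuine gap in the non-isometry claim. Your stable-norm computation and your nonflatness argument are fine; at $s=0$ your metric is essentially the paper's $g_{\lambda,0}$, with the ellipse making a half-turn instead of a full turn. The problem is that your second parameter is pure gauge.

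Let $\widehat h_u$ be your loop at $s=0$, so that $h_t=\widehat h_{\sigma_s(t)}$. Since $q(t)\,dt=E(\lambda)^{-1}\sigma_s'(t)\,dt$, you have
\[
g_{\lambda,s}=E(\lambda)^{-2}\sigma_s'(t)^2\,dt^2+\widehat h_{\sigma_s(t)}=\Phi_s^*\,g_{\lambda,0},\qquad \Phi_s(x,t)=(x,\sigma_s(t)).
\]
Because $\sigma_s'>0$ and $\sigma_s(t+1)=\sigma_s(t)+1$, the map $\Phi_s$ is a diffeomorphism of $\T^3$ isotopic to the identity. So for fixed $\lambda$ all your metrics are isometric, and no invariant ($\max\Sc$, $\min\Sc$, $|\mathrm{Rm}|$, \dots) can separate them. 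The constancy of $|\mathrm{II}|^2$ that you noticed is exactly this.

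The $q'/q$ terms you hope for do not exist in any case. The horizontal fibers are flat, minimal and equidistant, so the Gauss and Riccati equations give $\Sc=-|B|^2$ exactly. In fact, take a fixed ellipse whose axis turns monotonically through a single half-turn. Then $\mathcal A_g=|\cdot|$ forces $q\,dt$ to be a constant multiple of $d\theta$: the push-forward of $q\,dt$ to the $\theta$-circle must be uniform, since no Fourier coefficient of $u\mapsto\sqrt{\lambda\cos^2u+\lambda^{-1}\sin^2u}$ vanishes. So your ansatz admits no nontrivial second parameter at all.

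The missing idea is to give the loop of flat tori an extra symmetry that decouples $q$ from the parametrization. The paper rotates the eigenframe through a full turn, $\theta(t)=2\pi t$, so that $h_\lambda(t+\tfrac12)=h_\lambda(t)$. Splitting $\int_0^1$ into two halves then shows $\mathcal A_g(z)=|z|$ for every $q>0$ with $q(t)+q(t+\tfrac12)=2/\kappa_\lambda$, for instance $q_{\lambda,s}=\kappa_\lambda^{-1}(1+s\sin2\pi t)$. Now $|h_\lambda^{-1}\dot h_\lambda|$ stays constant while $q$ genuinely varies, so
\[
\Sc(g_{\lambda,s})=-\frac{2\pi^2\kappa_\lambda^2(\lambda^2-\lambda^{-2})^2}{(1+s\sin2\pi t)^2}.
\]
Its minimum $-2\pi^2\kappa_\lambda^2(\lambda^2-\lambda^{-2})^2/(1-s)^2$ separates different $s$ at fixed $\lambda$, and the volume $\kappa_\lambda^{-1}$ separates different $\lambda$.
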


Therefore, flatness of a metric on $\mathbb T^n$ cannot be characterized from the stable norm on $H_{n-1}$ alone, not even locally.  For comparison, Osuna proved that, under the unit-volume normalization, equality of both the $H_1$ and $H_{n-1}$ stable norms with those of a flat torus does force isometry with that flat torus \cite{Osuna2010}.  This leaves open whether equality of the codimension-one stable norm together with equality of volume is already rigid, we formulate this question in Section \ref{sec:open}.

The flexibility in our counterexamples is that the minimizing foliation in a
mixed class is allowed to depend nonlinearly on that class: its lifted leaves
are $z\cdot x+c\,\sigma_z(t)=\theta$, rather than the standard affine
hyperplanes $z\cdot x+ct=\theta$.  This distinction is essential.  
If we strictly require that the foliations on $\mathbb T^n$ are all by affine hyperplanes, then we do have flat rigidity. The proof we give in Appendix \ref{app:affine} follows from a characterization of Hangan \cite{Hangan1996} on a metric on $\mathbb R^n$ for which every hyperplane is minimal. 

\subsection{Organization of the paper}
The paper is organized as follows.  Section \ref{sec:prelim} recalls the definitions of stable norms
and calibrations.  In Section \ref{sec:formula} we prove the exact stable norm
formula, construct the calibrated foliations, and derive the counterexamples
to Bangert's conjecture.  Section \ref{sec:cubic} constructs the two-parameter
family on $\T^3$ with the exact Euclidean stable norm, and we use that family to show that the Euclidean norm plus volume data are not locally injective inside any open neighborhood of the flat metric.  Section \ref{sec:open}
formulates the remaining stable norm and volume rigidity problem and discusses
the higher-dimensional limitations of the Euclidean construction.
Appendix \ref{app:differentiability} proves the equivalence between Bangert's
foliation hypothesis and the differentiability of the stable norm away from the
origin. Appendix \ref{app:volume-rigidity} proves the stable norm and volume rigidity
statement within the cohomogeneity-one class, while Appendix \ref{app:affine}
establishes rigidity under the stronger affine foliation hypothesis.

\medskip
\noindent\textbf{Acknowledgement}. I am deeply grateful to my advisor, Andr\'{e} Neves, for many helpful discussions, guidance, and support.

\medskip
\noindent\textbf{Note added.}
While preparing this manuscript, we learned of the recent independent
work of Marques, Neves, and Sun \cite{MarquesNevesSun2026}.  Their Theorem~5.1 gives, in an equivalent cofactor formulation,
the stable-norm formula for the cohomogeneity-one class considered here
and constructs foliations by area-minimizing hypersurfaces in every
primitive codimension-one homology class.  Their Theorems~1.4 and~1.5
independently provide nonflat examples giving a negative answer to
Bangert's question.  They also establish local rigidity near a flat
metric under the additional assumption of equal volume.  The overlapping
results in the two works were obtained independently.

\section{Stable norm, Poincar\'e duality, and calibrations}
\label{sec:prelim}

Let $(M^n,g)$ be a closed oriented Riemannian manifold. The stable norm of
$r\in H_k(M,\R)$ can be defined by
\begin{equation}\label{eq:stable-definition}
  \|r\|_{\st,g}
  :=\inf\{\Mass_g(T): T\text{ is a closed real }k\text{-current and }[T]=r\}.
\end{equation}
Equivalently, one may use real Lipschitz cycles.  We will only use
$k=n-1$, where minimizers are particularly well suited to calibration
methods, see \cite{Federer1974} and \cite{AuerBangert2006}.

A smooth $(n-1)$-form $\Omega$ has comass at most one if
\[
  |\Omega(\xi)|\le1
\]
for every unit simple $(n-1)$-vector $\xi$.  If $d\Omega=0$ and an oriented
hypersurface $\Sigma$ satisfies
\[
  \Omega|_\Sigma=dA_\Sigma,
\]
then $\Omega$ calibrates $\Sigma$.  The standard calibration argument gives
the following lemma. See also \cite{HarveyLawson1982}.

\begin{lemma}
\label{lem:calibration}
If a closed comass-one $(n-1)$-form $\Omega$ calibrates an oriented closed
hypersurface $\Sigma$, then
\[
  \Mass_g(T)\ge \operatorname{Area}_g(\Sigma)
\]
for every closed real $(n-1)$-current $T$ homologous to $\Sigma$.  Hence
$\Sigma$ realizes the stable norm of its homology class.
\end{lemma}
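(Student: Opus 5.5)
The plan is the classical calibration argument (Federer, Harvey--Lawson), organized in three steps: a pointwise bound from the comass condition, homological invariance of the pairing with the closed form $\Omega$, and equality on $\Sigma$.

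\emph{Pointwise bound.} Let $T$ be a closed real $(n-1)$-current of finite mass with $[T]=[\Sigma]$; if $\Mass_g(T)=\infty$ there is nothing to prove. By the Riesz representation of finite-mass currents, $T=\vec T\,\|T\|$, where $\|T\|$ is a Radon measure of total mass $\Mass_g(T)$ and $\vec T$ is a $\|T\|$-measurable unit $(n-1)$-vector field. The comass bound $|\Omega(\xi)|\le1$ is stated for unit simple vectors, so I would use the mass norm dual to comass; for $(n-1)$-vectors in dimension $n$ every vector is simple anyway. This gives
\[
  T(\Omega)=\int\langle\Omega,\vec T\rangle\,d\|T\|\le\Mass_g(T).
\]

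\emph{Homological invariance.} The main point is that $T(\Omega)$ depends only on the homology class of $T$. Since $T-[\Sigma]$ is null-homologous as a real current, write $T-[\Sigma]=\partial S$ for some real $n$-current $S$. This uses de Rham's theorem that real homology of currents agrees with singular real homology on a closed manifold. Because $d\Omega=0$,
\[
  T(\Omega)-[\Sigma](\Omega)=\partial S(\Omega)=S(d\Omega)=0.
\]
Equivalently, $T(\Omega)$ is the pairing of $[T]$ with the de Rham class $[\Omega]$, so it depends only on $r=[T]$.

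\emph{Equality on $\Sigma$.} Since $\Omega|_\Sigma=dA_\Sigma$,
\[
  [\Sigma](\Omega)=\int_\Sigma dA_\Sigma=\operatorname{Area}_g(\Sigma).
\]
Combining the three steps,
\[
  \Mass_g(T)\ge T(\Omega)=\operatorname{Area}_g(\Sigma).
\]
Taking the infimum over $T$ in \eqref{eq:stable-definition} gives $\|[\Sigma]\|_{\st,g}\ge\operatorname{Area}_g(\Sigma)$. Taking $T=[\Sigma]$ itself gives the reverse inequality, so $\Sigma$ realizes the stable norm of its class.

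The only step that needs a citation rather than a computation is the identification of real homology of currents with $H_{n-1}(M,\R)$, which justifies writing $T-[\Sigma]=\partial S$. Everything else is a direct computation.
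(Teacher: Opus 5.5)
Your proposal is correct and follows the same route as the paper: the comass bound gives $\Mass_g(T)\ge T(\Omega)$, closedness of $\Omega$ with Stokes' theorem gives $T(\Omega)=\Sigma(\Omega)$, and the calibration condition gives $\Sigma(\Omega)=\operatorname{Area}_g(\Sigma)$. The only difference is that you add detail the paper's three-line proof leaves implicit, namely the polar decomposition of $T$ and the identification of current homology with real homology.
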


\begin{proof}
The comass inequality gives $\Mass_g(T)\ge T(\Omega)$.  Since $d\Omega=0$
and $[T]=[\Sigma]$, it follows from Stokes' theorem that
\[
  T(\Omega)=\Sigma(\Omega)=\operatorname{Area}_g(\Sigma).
\]
\end{proof}

For the torus
\[
  \T^n=\R^n/\Z^n=\T^{n-1}_x\times S^1_t,
\]
we orient by
\[
  dx^1\wedge\cdots\wedge dx^{n-1}\wedge dt.
\]
Poincar\'e duality identifies $H_{n-1}(\T^n,\R)$ with $H^1(\T^n,\R)$.
We write $r_{z,c}$ for the class characterized by
\begin{equation}\label{eq:PD-coordinates}
  \PD(r_{z,c})=\sum_{i=1}^{n-1} z_i[dx^i]+c[dt],
  \qquad (z,c)\in\R^{n-1}\times\R.
\end{equation}
The signs of the coordinate hypersurface basis are chosen consistently with
\eqref{eq:PD-coordinates}.

We have the following identity
\begin{equation}\label{eq:div-calibration}
  d(\iota_\nu dV_g)
  =\mathcal L_\nu dV_g
  =(\operatorname{div}_g\nu)dV_g
\end{equation}
for a unit vector field $\nu$. Thus a cooriented hypersurface foliation with
divergence-free unit normal is automatically calibrated leafwise.

\section{The exact stable norm and counterexamples to Bangert's conjecture}
\label{sec:formula}

In this section, we prove Theorem \ref{thm:generalformula-intro} and Corollary \ref{cor:bangert-intro}. Recall
\[
  \T^n=\T^{n-1}_x\times S^1_t
\]
with coordinates $(x_1,\ldots, x_{n-1},t)$ and consider metrics
\begin{equation}\label{eq:generalmetric}
  g=q(t)^2dt^2+h_t,
  \qquad \det h_t=1,
\end{equation}
where $q:S^1\to(0,\infty)$ is smooth and $h_t$ is a smooth periodic
family of constant-coefficient metrics on the horizontal torus
$\T^{n-1}\times\{t\}$. Geometrically, the metric is a loop of marked
flat, unit-volume $(n-1)$-tori over a circle; the base arc-length element is
$q(t)dt$. Its volume form is
\begin{equation}\label{eq:volumeform}
  dV_g=q(t)\,dx_1\wedge\cdots\wedge dx_{n-1}\wedge dt.
\end{equation}

The vertical unit vector field is
\begin{equation}\label{eq:verticalunit}
  \nu_0=q(t)^{-1}\partial_t.
\end{equation}
The horizontal fibers $F_t=\T^{n-1}\times\{t\}$ have mean curvature
\begin{align}\label{eq:horizontal-mean-curvature}
  H_{F_t}=\operatorname{div}_g \nu_0
  &=\frac{1}{q(t)\sqrt{\det h_t}}
    \partial_t\!\left(q(t)\sqrt{\det h_t}\,\frac1{q(t)}\right) \\
  &=\frac1{q(t)}\frac{d}{dt}\log\sqrt{\det h_t}=0.
\end{align}
Thus all horizontal fibers are minimal. 

As in \eqref{eq:PD-coordinates}, write
\[
  \PD(r_{z,c})=\sum_{i=1}^{n-1}z_i[dx_i]+c[dt],
  \qquad (z,c)\in\R^{n-1}\times\R.
\]
We consider two cases: $z=0$ and $z\ne0$.

\subsection{The horizontal class}

When $z=0$, the primitive classes are $r_{0,\pm1}$, represented by the
horizontal foliation $\{F_t\}_{t\in S^1}$. Since
$\operatorname{div}_g \nu_0=0$, the form $\iota_{\nu_0}dV_g$ is closed, has comass
one, and restricts to the volume form of every $F_t$. It therefore
calibrates the horizontal foliation. Since $\det h_t=1$, each horizontal
fiber has area one, and hence
\begin{equation}\label{eq:horizontal-primitive-norm}
  \|r_{0,\pm1}\|_{\st,g}=1.
\end{equation}

\subsection{The classes with nonzero horizontal component}

For $z\in\R^{n-1}\setminus\{0\}$, define
\begin{equation}\label{eq:sweptarea}
  \mathcal A_g(z)
  :=\int_0^1 q(t)\sqrt{z^Th_t^{-1}z}\,dt.
\end{equation}
Geometrically, for integral $z\neq0$, $\mathcal A_g(z)$ is the area
of the hypersurface obtained by sweeping the horizontal subtorus dual
to $z$ once around the base circle.

On the universal cover $\R^{n-1}\times\R$, define
\begin{equation}\label{eq:utilde}
  \widetilde u_{z,c}(x,t)
  :=z\cdot x
  +\frac{c}{\mathcal A_g(z)}
   \int_0^t q(s)\sqrt{z^Th_s^{-1}z}\,ds.
\end{equation}
For $(z,c)\in \mathbb Z^{n-1}\times \mathbb Z$, it follows from periodicity that for every
$(k,\ell)\in\Z^{n-1}\times\Z$ we have
\[
  \widetilde u_{z,c}(x+k,t+\ell)
  -\widetilde u_{z,c}(x,t)
  =z\cdot k+c\ell\in\Z.
\]
Hence $\widetilde u_{z,c}$ descends modulo $\Z$ to a circle-valued
function
\[
  u_{z,c}:\T^n\longrightarrow S^1.
\]
Since $z\ne0$, its differential never vanishes, so $u_{z,c}$ is a
submersion. Its fibers
\begin{equation}\label{eq:helicalleaves}
  \Sigma_{z,c,\theta}
  :=u_{z,c}^{-1}(\theta)
  =\left\{(x,t):
  z\cdot x+\frac{c}{\mathcal A_g(z)}
  \int_0^t q(s)\sqrt{z^Th_s^{-1}z}\,ds
  =\theta\pmod1\right\}
\end{equation}
belong to the class $r_{z,c}$ and form a smooth foliation of $\T^n$.

\begin{lemma}
\label{lem:fibertopology}
If $(z,c)\in\Z^n$ is primitive and $z\ne0$, every fiber of $u_{z,c}$ is a
connected embedded $(n-1)$-torus representing $r_{z,c}$.
\end{lemma}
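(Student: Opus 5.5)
The plan is to conjugate $u_{z,c}$ by a diffeomorphism of $\T^n$ isotopic to the identity into a linear circle-valued map. Then connectedness and the torus topology of the fibers can be read off from standard facts about linear submersions $\T^n\to S^1$.

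Write $\sigma(t):=\mathcal A_g(z)^{-1}\int_0^t q(s)\sqrt{z^Th_s^{-1}z}\,ds$. The integrand is positive because $z\neq0$ and $h_s^{-1}$ is positive definite, so $\sigma:\R\to\R$ is a smooth increasing diffeomorphism. It satisfies $\sigma(0)=0$ and $\sigma(t+1)=\sigma(t)+1$. Consequently $\Phi(x,t):=(x,\sigma(t))$ descends to a diffeomorphism of $\T^n$, isotopic to the identity through $(x,(1-\tau)t+\tau\sigma(t))$, and we have $\widetilde u_{z,c}=L\circ\widetilde\Phi$ with $L(x,t)=z\cdot x+ct$. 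Hence each fiber $\Sigma_{z,c,\theta}$ is $\Phi^{-1}$ of a fiber of the linear map $\ell:\T^n\to\R/\Z$ induced by $L$. Since $\Phi$ is isotopic to the identity, it acts trivially on homology. It therefore suffices to treat the linear fibration $\ell$.

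For the linear map, primitivity of $(z,c)\in\Z^n$ gives a matrix $B\in GL_n(\Z)$ whose last row is $(z,c)$; this is the standard fact that a primitive vector extends to a $\Z$-basis. The automorphism of $\T^n$ induced by $B$ carries $\ell$ to the last coordinate projection $\T^n\to S^1$. The fibers of that projection are the connected, embedded tori $\T^{n-1}\times\{\theta\}$, and pulling back by $B$ preserves these properties. Therefore every fiber of $\ell$, and hence every $\Sigma_{z,c,\theta}$, is a connected embedded $(n-1)$-torus. Embeddedness also follows directly from the fact that $u_{z,c}$ is a submersion, as noted before the lemma.

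For the homology class, the fiber of a submersion $u:\T^n\to S^1$, cooriented by $du$, is Poincaré dual to $u^*[d\theta]$. This is the standard identity $\int_{u^{-1}(\theta)}\alpha=\int_{\T^n} u^*d\theta\wedge\alpha$ for closed $(n-1)$-forms $\alpha$, which follows from the coarea formula or from Fubini after the change of coordinates $B$. Here $u_{z,c}^*[d\theta]=[d\widetilde u_{z,c}]=\sum z_i[dx^i]+c\,[\sigma'(t)dt]$. Since $\int_0^1\sigma'=1$, the form $\sigma'(t)dt$ is cohomologous to $dt$, so the class is $\sum z_i[dx^i]+c[dt]=\PD(r_{z,c})$.

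The main obstacle is bookkeeping rather than substance: the orientation and sign conventions must match those fixed in \eqref{eq:PD-coordinates}. This amounts to checking that the coorientation by $du$, together with the orientation $dx^1\wedge\cdots\wedge dx^{n-1}\wedge dt$, yields $\PD$ with a plus sign, as in the horizontal case $z=0$. Connectedness is the only place primitivity is used, and it is settled by the $GL_n(\Z)$ completion.
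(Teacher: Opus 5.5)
Your proposal is correct and follows essentially the same route as the paper. Both reparametrize by $(x,t)\mapsto(x,\sigma(t))$ to reduce $u_{z,c}$ to the linear map $z\cdot x+c\theta$, use primitivity to get connected torus fibers, and read off the class from $[du_{z,c}]=\sum z_i[dx^i]+c[dt]$. The only difference is that you use primitivity through a $GL_n(\Z)$ completion, while the paper uses surjectivity of $(k,\ell)\mapsto z\cdot k+c\ell$ with rank-$(n-1)$ kernel, which is equivalent.
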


\begin{proof}
Set
\[
  \tau_z(t)
  :=\frac1{\mathcal A_g(z)}
    \int_0^t q(s)\sqrt{z^Th_s^{-1}z}\,ds.
\]
Then $\tau_z'(t)>0$ and $\tau_z(t+1)=\tau_z(t)+1$, so
$(x,t)\mapsto(x,\tau_z(t))$ descends to a diffeomorphism of $\T^n$.
Under this diffeomorphism, $u_{z,c}$ becomes the linear function
\[
  (x,\theta)\longmapsto z\cdot x+c\theta\pmod1.
\]
Its induced map on fundamental groups is
\[
  \Z^n\longrightarrow\Z,
  \qquad (k,\ell)\longmapsto z\cdot k+c\ell.
\]
Primitivity is equivalent to surjectivity, and the kernel is a rank $(n-1)$
lattice. Hence every fiber is connected and diffeomorphic to $\T^{n-1}$.
Moreover,
\[
  [du_{z,c}]
  =\sum_{i=1}^{n-1}z_i[dx_i]+c[dt],
\]
so an oriented fiber represents $r_{z,c}$.
\end{proof}

Differentiating \eqref{eq:utilde} gives
\begin{equation}\label{eq:du}
  du_{z,c}
  =\sum_{i=1}^{n-1}z_i\,dx^i
   +\frac{c}{\mathcal A_g(z)}
    q(t)\sqrt{z^Th_t^{-1}z}\,dt.
\end{equation}
Its unit normal is then given by
\begin{equation}\label{eq:unitnormal}
  \nu_{z,c}
  =\frac{1}{\sqrt{\mathcal A_g(z)^2+c^2}}
  \left(
    \mathcal A_g(z)\frac{h_t^{-1}z}{\sqrt{z^Th_t^{-1}z}}
    +\frac{c}{q(t)}\partial_t
  \right).
\end{equation}
The horizontal components of $\nu_{z,c}$ depend only on $t$, whereas
\[
  q(t)\nu_{z,c}^t
  =\frac{c}{\sqrt{\mathcal A_g(z)^2+c^2}}
\]
is constant. Since $\det h_t=1$, the volume density of $g$ is $q(t)$, and
therefore
\[
  \operatorname{div}_g\nu_{z,c}
  =\frac1{q(t)}\left[
    \sum_{i=1}^{n-1}\partial_{x^i}\bigl(q(t)\nu_{z,c}^i\bigr)
    +\partial_t\bigl(q(t)\nu_{z,c}^t\bigr)
  \right]
  =0.
\]
Consequently, the closed comass-one form
$\iota_{\nu_{z,c}}dV_g$ calibrates every fiber.

It remains only to compute the area of a fiber. Horizontal translations
are isometries of $g$ and act transitively on the fibers of $u_{z,c}$, so
all fibers have the same area.
Since $g^{-1}=q(t)^{-2}\partial_t^2+h_t^{-1}$, it follows from \eqref{eq:du} that 

\[|du_{z,c}|^2_g=z^Th_t^{-1}z+q(t)^{-2}\left(\dfrac{c}{\mathcal A_g(z)}q(t)\sqrt{z^Th_t^{-1}z}\right)^2=z^Th_t^{-1}z\dfrac{\mathcal A_g(z)^2+c^2}{\mathcal A_g(z)^2}.\]
For any $0\leq \theta_0\leq 1$, the coarea formula then gives
\begin{align}\label{eq:fiber-area}
  \operatorname{Area}_g(\Sigma_{z,c,\theta_0})&=\int_{S^1}\operatorname{Area}_g(\Sigma_{z,c,\theta})d\theta\\
  &=\int_{\T^n}|du_{z,c}|_g\,dV_g \\
  &=\frac{\sqrt{\mathcal A_g(z)^2+c^2}}{\mathcal A_g(z)}
    \int_0^1q(t)\sqrt{z^Th_t^{-1}z}\,dt \\
  &=\sqrt{\mathcal A_g(z)^2+c^2}.
\end{align}
Because the fiber is calibrated, its area is the least mass in its homology
class. In particular, we have for every metric in \eqref{eq:generalmetric},
\begin{equation}\label{eq:mainformula}
  \|r_{z,c}\|_{\st,g}
  =\sqrt{\mathcal A_g(z)^2+c^2}
\end{equation}
for every primitive class $r_{z,c}\in H_{n-1}(\mathbb T^n , \mathbb Z)$.

The argument proves the formula \eqref{eq:mainformula-intro} for primitive integral
classes.  If $(z,c)=k(z_0,c_0)$ with $k\in\mathbb N$ and
$(z_0,c_0)$ primitive, then the absolute homogeneity of the stable norm and
of $\mathcal A_g$ gives
\[
\|r_{z,c}\|_{\mathrm{st},g}
=
k\|r_{z_0,c_0}\|_{\mathrm{st},g}
=
\sqrt{\mathcal A_g(z)^2+c^2}.
\]
Thus the formula holds for every integral class.  It then holds for
rational classes by rescaling.  Finally, rational classes are dense in
$H_{n-1}(\mathbb T^n,\mathbb R)$, and both the stable norm and
\[
(z,c)\longmapsto\sqrt{\mathcal A_g(z)^2+c^2}
\]
are continuous.  Passing to the limit proves the formula for every real
class.

\subsection{Counterexamples to Bangert's conjecture}

We now derive Corollary~\ref{cor:bangert-intro} directly from
Theorem~\ref{thm:generalformula-intro}.  Let
\[
  g=q(t)^2dt^2+h_t
\]
be a metric of the form \eqref{eq:generalmetric}, and assume that the
periodic family $t\mapsto h_t$ is nonconstant.  Theorem~\ref{thm:generalformula-intro}
gives, in every primitive class in $H_{n-1}(\T^n,\Z)$, a foliation by
connected calibrated homologically area-minimizing tori.

It remains only to verify that $g$ is nonflat.  Since the family
$t\mapsto h_t$ is nonconstant, the shape operator 
\[
  B(t)
  =\frac{1}{2q(t)}
    h_t^{-1}\dot h_t
\]
of some
horizontal slice $F_{t_0}=\T^{n-1}\times \{t_0\}$ is nonzero.  The condition
$\det h_t\equiv1$ implies that $B(t_0)$ is trace-free, hence it has
principal curvatures $\kappa_+>0$ and $\kappa_-<0$.  If $E_+$ and
$E_-$ are corresponding orthonormal principal directions, then the
intrinsic flatness of $F_{t_0}$ and the Gauss equation give
\[
K_g(E_+,E_-)=-\kappa_+\kappa_->0.
\]
Thus $g$ has nonzero sectional curvature and is therefore nonflat.
This proves Corollary~\ref{cor:bangert-intro}.

\section{The Euclidean stable norm in dimension three}
\label{sec:cubic}

We now specialize the construction of Section~\ref{sec:formula} to
$\T^3=\T^2\times S^1$.  The aim is to choose a nonconstant unit-volume
loop of horizontal metrics for which the swept area norm $\mathcal A_g$ is
exactly Euclidean. Recall that $g=q(t)^2dt^2+h_t$. Since $\det h_t=1$, it can be orthogonally diagonalized as 
\[h_t=R_{\theta(t)}^T\operatorname{diag}(\alpha(t), \beta(t))R_{\theta(t)},\]
where $R_\theta$ is the rotation matrix by angle $\theta$.
We try to look for examples where $\alpha, \beta$ are constant. Set
$\alpha(t)=\lambda^2$, $\beta(t)=\lambda^{-2}$ where $\lambda>0$ is a fixed constant and set
\[
  D_\lambda=\operatorname{diag}(\lambda^2,\lambda^{-2}).
\]
For further simplification, let $\theta(t)=2\pi t$ and set
\begin{equation}\label{eq:Hlambda}
  h_\lambda(t)=R_{2\pi t}^{T}D_\lambda R_{2\pi t}.
\end{equation}
We then have that $\det h_\lambda(t)=1$.

For $z\in\R^2$, the function
\[
  z\longmapsto
  \int_0^1\sqrt{z^Th_\lambda(t)^{-1}z}\,dt
\]
is invariant under planar rotations and is positively homogeneous.
Consequently, it is a constant multiple of the Euclidean norm.  We write
\begin{equation}\label{eq:rotational-average}
  \int_0^1\sqrt{z^Th_\lambda(t)^{-1}z}\,dt
  =\kappa_\lambda|z|,
\end{equation}
where evaluation at $z=e_1$ gives
\begin{equation}\label{eq:kappa-cubic}
  \kappa_\lambda
  =\int_0^1
    \sqrt{\lambda^{-2}\cos^2(2\pi t)
          +\lambda^2\sin^2(2\pi t)}\,dt.
\end{equation}

The next theorem gives a general condition on the vertical function $q(t)$ under
which the codimension-one stable norm is Euclidean.

\begin{theorem}\label{thm:general-q-cubic}
Fix $\lambda>1$, and let $q:S^1\to(0,\infty)$ be a smooth function
satisfying
\begin{equation}\label{eq:q-half-period-condition}
  q(t)+q\left(t+\frac12\right)=\frac{2}{\kappa_\lambda}
  \qquad\text{for every }t\in S^1.
\end{equation}
Then the metric
\begin{equation}\label{eq:g-lambda-q}
  g_{\lambda,q}=q(t)^2dt^2+h_\lambda(t)
\end{equation}
on $\T^3$ is nonflat and satisfies
\begin{equation}\label{eq:cubicnorm-general-q}
  \|r_{a,b,c}\|_{\st,g_{\lambda,q}}
  =\sqrt{a^2+b^2+c^2}
\end{equation}
for every $(a,b,c)\in H_2(\T^3,\R)\cong\R^3$.  Moreover,
\begin{equation}\label{eq:volume-general-q}
  \Vol(\T^3,g_{\lambda,q})=\kappa_\lambda^{-1}.
\end{equation}
\end{theorem}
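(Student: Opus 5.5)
The plan is to apply Theorem~\ref{thm:generalformula-intro} directly. Since $\det h_\lambda(t)=1$ and $q>0$ is smooth and periodic, $g_{\lambda,q}$ is of the form \eqref{eq:generalmetric}. That theorem then gives $\|r_{z,c}\|_{\st,g_{\lambda,q}}=\sqrt{\mathcal A_g(z)^2+c^2}$. It therefore suffices to show $\mathcal A_g(z)=|z|$ for all $z\in\R^2$, after identifying $(a,b,c)$ with $(z,c)$ through the Poincar\'e-dual coordinates \eqref{eq:PD-coordinates}.

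To compute $\mathcal A_g(z)$, I use the fact that $h_\lambda(t)^{-1}=R_{2\pi t}^TD_\lambda^{-1}R_{2\pi t}$. The integrand $f_z(t):=\sqrt{z^Th_\lambda(t)^{-1}z}=|D_\lambda^{-1/2}R_{2\pi t}z|$ is $\tfrac12$-periodic in $t$, since $R_{2\pi(t+1/2)}=-R_{2\pi t}$. I then split $[0,1]$ into $[0,\tfrac12]$ and $[\tfrac12,1]$ and substitute $t\mapsto t+\tfrac12$ in the second piece:
\[
\mathcal A_g(z)=\int_0^{1/2}\bigl(q(t)+q(t+\tfrac12)\bigr)f_z(t)\,dt=\frac{2}{\kappa_\lambda}\int_0^{1/2}f_z(t)\,dt=\frac1{\kappa_\lambda}\int_0^1 f_z(t)\,dt=|z|.
\]
The last equality is \eqref{eq:rotational-average}. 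That identity should be justified in one line: replacing $z$ by $R_\phi z$ shifts $t$ in a $1$-periodic integrand, so the average is rotation invariant and homogeneous. This step is the only real content of the theorem.

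For the volume, \eqref{eq:volumeform} gives $\Vol=\int_0^1q(t)\,dt$. The same half-period splitting gives $\int_0^{1/2}\frac{2}{\kappa_\lambda}\,dt=\kappa_\lambda^{-1}$.

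For nonflatness, I invoke the argument of Corollary~\ref{cor:bangert-intro}. It only requires $t\mapsto h_\lambda(t)$ to be nonconstant. Since $\lambda>1$, $D_\lambda$ is not a multiple of the identity, so conjugating by a nontrivial rotation changes it; for example $h_\lambda(0)=D_\lambda\neq D_\lambda^{-1}=h_\lambda(1/4)$. The shape operator is then nonzero somewhere, and the Gauss-equation argument produces positive sectional curvature.

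I expect no genuine obstacle. The one point needing care is the period bookkeeping: $f_z$ is $\tfrac12$-periodic, which is exactly what allows the half-period condition \eqref{eq:q-half-period-condition} to replace $q$ by its constant average $1/\kappa_\lambda$. A secondary check is that the identification of $H_2(\T^3,\R)$ with $\R^3$ by $(a,b,c)\leftrightarrow(z,c)$ matches the paper's orientation conventions; this is harmless because the resulting norm is Euclidean and sign-symmetric.
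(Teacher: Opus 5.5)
Your proposal is correct and matches the paper's proof essentially step by step. Both use the half-period splitting with $h_\lambda(t+\tfrac12)=h_\lambda(t)$ together with the rotational-average identity to get $\mathcal A_g(z)=|z|$, then the same splitting for the volume, and both inherit nonflatness from the earlier shape-operator/Gauss-equation argument. Your explicit check $h_\lambda(0)=D_\lambda\neq D_\lambda^{-1}=h_\lambda(1/4)$ and your one-line justification of rotation invariance are small details the paper leaves implicit.
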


\begin{proof}
It follows from the definition that $h_\lambda$ satisfies the periodicity condition:
\[h_\lambda\left(t+\dfrac{1}{2}\right)=h_\lambda(t)\]
For $z\in \mathbb R^2$, we compute the function $\mathcal A_{g_{\lambda, q}}$ as in Section \ref{sec:formula}, using a change of variable as follows:
\begin{align*}
  \mathcal A_{g_{\lambda,q}}(z)
  &=\int_0^1q(t)\sqrt{z^T h_\lambda(t)^{-1}z}\,dt\\
  &=\int_0^{1/2}
    \left[q(t)+q\left(t+\frac12\right)\right]\sqrt{z^Th_\lambda(t)^{-1}z}\,dt\\
  &=\frac{2}{\kappa_\lambda}\int_0^{1/2}\sqrt{z^Th_\lambda(t)^{-1}z}\,dt\\
  &=\frac{1}{\kappa_\lambda}\int_0^1\sqrt{z^Th_\lambda(t)^{-1}z}\,dt
  =|z|,
\end{align*}
where the last equality follows from \eqref{eq:rotational-average}.
Theorem~\ref{thm:generalformula-intro} therefore gives
\eqref{eq:cubicnorm-general-q}.

Since $\det h_\lambda(t)=1$, the volume form on $\mathbb T^3$ is
$dV_g=q(t)\,dx\,dy\,dt$.  Integrating
\eqref{eq:q-half-period-condition} over $[0,\frac12]$ with a change of variable gives
\[
  \int_0^1q(t)\,dt=\kappa_\lambda^{-1},
\]
which proves \eqref{eq:volume-general-q}.  Finally, the map
$t\mapsto h_\lambda(t)$ is non-constant when $\lambda>1$, so the nonflatness of the constructed metrics
follows from the argument in Section~\ref{sec:formula}.
\end{proof}

\begin{remark}
    Condition~\eqref{eq:q-half-period-condition} is equivalent to
\[
q(t)
=
\kappa_\lambda^{-1}(1+\psi(t)),
\qquad
\psi\left(t+\frac12\right)=-\psi(t),
\qquad
\|\psi\|_{C^0}<1.
\]
The space of such smooth functions $\psi$ is an infinite-dimensional subspace of $C^\infty(S^1)$.
\end{remark}

By specializing $q(t)$ to some special form, we can state and prove Theorem~\ref{thm:cubic-intro} in a more precise form as follows. 

\begin{corollary}\label{cor:sinusoidal-family}
For $(\lambda, s)\in (1, \infty)\times [0,1)$, define
\begin{equation}\label{eq:qs-family}
  q_{\lambda,s}(t)
  =\kappa_\lambda^{-1}\bigl(1+s\sin(2\pi t)\bigr)
\end{equation}
and
\begin{equation}\label{eq:gs-family}
  g_{\lambda,s}=q_{\lambda,s}(t)^2dt^2+h_\lambda(t).
\end{equation}
Then every $g_{\lambda,s}$ is nonflat and has the Euclidean
codimension-one stable norm,
\begin{equation}\label{eq:cubicnorm}
  \|r_{a,b,c}\|_{\st,g_{\lambda,s}}
  =\sqrt{a^2+b^2+c^2}.
\end{equation}
For each fixed $\lambda>1$, the metrics $g_{\lambda,s}$ have the same
volume $\kappa_\lambda^{-1}$ and are pairwise non-isometric.  For each
fixed $s<1$, as $\lambda\downarrow1$ they converge, after pullback by
diffeomorphisms isotopic to the identity, to the unit Euclidean flat metric
in the $C^\infty$ topology.
\end{corollary}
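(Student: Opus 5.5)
The plan is to reduce everything to Theorem~\ref{thm:general-q-cubic}, and then to add a curvature invariant for the non-isometry claim and a reparametrization of the base circle for the convergence claim.

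\emph{Step 1: the hypotheses of Theorem~\ref{thm:general-q-cubic} hold.} Since $\sin(2\pi(t+\tfrac12))=-\sin(2\pi t)$, we have
\[
q_{\lambda,s}(t)+q_{\lambda,s}(t+\tfrac12)=2\kappa_\lambda^{-1}.
\]
Since $0\le s<1$, we also have $q_{\lambda,s}\ge\kappa_\lambda^{-1}(1-s)>0$. So $q_{\lambda,s}$ is a smooth positive function satisfying \eqref{eq:q-half-period-condition}; equivalently, $\psi=s\sin(2\pi t)$ in the Remark. Theorem~\ref{thm:general-q-cubic} then gives at once that $g_{\lambda,s}$ is nonflat, that it has the Euclidean norm \eqref{eq:cubicnorm}, and that its volume is $\kappa_\lambda^{-1}$, which is independent of $s$.

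\emph{Step 2: non-isometry via scalar curvature.} I will use the scalar curvature $\Sc$ as an isometry invariant, or more precisely its maximum and minimum over $\T^3$. The horizontal fibres $F_t$ are flat and minimal ($H=0$), and they are the level sets of the distance function $\int q\,dt$ along the unit normal $\nu_0$. The Gauss equation together with the Riccati equation then expresses $\Sc$ in terms of the shape operator alone, giving $\Sc=\pm|B|^2$ up to a universal sign; I will check the sign carefully. Here
\[
B(t)=\frac{1}{2q}\,h_\lambda^{-1}\dot h_\lambda .
\]
Because $h_\lambda(t)=R_{2\pi t}^TD_\lambda R_{2\pi t}$ rotates with constant eigenvalues, $h_\lambda^{-1}\dot h_\lambda$ is conjugate to a fixed trace-free matrix. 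Its norm is a constant $C_\lambda>0$ for $\lambda>1$, for instance $C_\lambda=2\pi(\lambda^2-\lambda^{-2})$ up to normalization. Hence
\[
|\Sc|=\frac{C_\lambda^2\kappa_\lambda^2}{4(1+s\sin 2\pi t)^2},
\]
and the ratio $\max|\Sc|/\min|\Sc|=\bigl((1+s)/(1-s)\bigr)^2$ is strictly increasing in $s\in[0,1)$. This distinguishes different $s$ for fixed $\lambda$. If pairwise non-isometry over all $(\lambda,s)$ is wanted, as in Theorem~\ref{thm:cubic-intro}, $s$ is recovered from this ratio and then $\lambda$ from $\max|\Sc|$; for that last step I still need to check that $C_\lambda\kappa_\lambda$ is strictly monotone in $\lambda$.

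I expect the main obstacle to be getting the formula $\Sc=\pm|B|^2$ and the constant $C_\lambda$ right. I would first verify it directly with the Koszul formula in the coordinates $(x,t)$, using that the only nontrivial derivatives are in $t$.

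\emph{Step 3: convergence as $\lambda\downarrow1$.} Set
\[
\tau(t)=\int_0^t q_{\lambda,s}\big/\textstyle\int_0^1 q_{\lambda,s},
\]
which equals $\kappa_\lambda\int_0^t q_{\lambda,s}$ since $\int_0^1 q_{\lambda,s}=\kappa_\lambda^{-1}$. The map $(x,t)\mapsto(x,\tau(t))$ is a diffeomorphism isotopic to the identity, and in the coordinate $\tau$ the metric becomes
\[
\kappa_\lambda^{-2}\,d\tau^2+h_\lambda(t(\tau)).
\]
As $\lambda\to1$ we have $\kappa_\lambda\to1$ from \eqref{eq:kappa-cubic}, and $h_\lambda\to I$ in $C^\infty$ uniformly in $t$. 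Since $t(\tau)$ has bounded derivatives, uniformly in $\lambda$ for fixed $s$, all $\tau$-derivatives of $h_\lambda(t(\tau))-I$ tend to $0$. Hence the pulled-back metrics converge in $C^\infty$ to $d\tau^2+dx^2+dy^2$.
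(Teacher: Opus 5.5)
Your proposal is correct and follows the paper's proof. It reduces to Theorem~\ref{thm:general-q-cubic}, uses the Gauss--Riccati identity $\Sc=-|B|^2$ for the flat minimal slices, and uses the same $\lambda$-independent base reparametrization $\tau(t)=\int_0^t(1+s\sin 2\pi u)\,du$; the one difference is that you separate $s$-values by the scale-free ratio $\max|\Sc|/\min|\Sc|=\bigl((1+s)/(1-s)\bigr)^2$ rather than by $\min\Sc$ at fixed $\lambda$. This is a harmless variant, and the exact value $|\Sc|=2\pi^2\kappa_\lambda^2(\lambda^2-\lambda^{-2})^2/(1+s\sin 2\pi t)^2$ shows your sample constant $C_\lambda$ is off by a factor $\sqrt2$, which does not affect the argument.
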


\begin{proof}
The function $q(t)=\kappa_\lambda^{-1}(1+s\sin(2\pi t))$ obviously satisfies the condition \eqref{eq:q-half-period-condition}; therefore the stable-norm, volume, and
nonflatness statements follow from Theorem~\ref{thm:general-q-cubic}.

It remains to distinguish the metrics for different values of $s$. We can do that by directly computing their scalar curvatures.

The shape operator of the horizontal two-tori can be computed as:
\[
  B_{\lambda,s}(t)
  =\frac{1}{2q_{\lambda,s}}
    h_\lambda(t)^{-1}\dot h_\lambda(t).
\]
Using the scalar Gauss-Riccati identity
$\Sc(g_{\lambda,s})=-|B_{\lambda,s}|^2$, one obtains
\begin{equation}\label{eq:scalar-family}
  \Sc(g_{\lambda,s})(t)
  =-
  \frac{2\pi^2\kappa_\lambda^2
  (\lambda^2-\lambda^{-2})^2}
  {(1+s\sin(2\pi t))^2}.
\end{equation}
Taking the infimum over $t$ we have 
\[\min \Sc(g_{\lambda, s})=-\dfrac{2\pi^2\kappa_\lambda^2(\lambda^2-\lambda^{-2})^2}{(1-s)^2}.\]
It follows that $g_{\lambda,s}$ and
$g_{\lambda,s'}$ are not isometric when $s\ne s'$ and $\lambda\neq 1$. Metrics corresponding to two different values $\lambda, \lambda'>1$ are distinguished by their volumes. 

Finally,
\[
  h_\lambda(t)\longrightarrow I,
  \qquad
  \kappa_\lambda\longrightarrow1
\]
in $C^\infty$ as $\lambda\downarrow1$.  Therefore
\[
  g_{\lambda,s}
  \longrightarrow
  (1+s\sin(2\pi t))^2dt^2+dx^2+dy^2.
\]
The limiting metric is the pullback of the unit Euclidean metric under the
circle reparametrization
\[
  \tau_s(t)=\int_0^t\bigl(1+s\sin(2\pi u)\bigr)\,du.
\]
Since this reparametrization is isotopic to the identity, the last conclusion follows. This completes the proof. 
\end{proof}

\section{Open questions}
\label{sec:open}

\subsection{Stable norm and volume rigidity on $\T^n$}

Theorem \ref{thm:cubic-intro} shows that the codimension-one stable norm
alone does not determine flatness, even when it agrees exactly with the
stable norm of the unit Euclidean flat torus. One could ask for an additional geometric condition that could characterize flatness. Motivated by Osuna's result in \cite{Osuna2010} we ask the
 following sharper rigidity question.

\begin{question}
\label{open:volume-rigidity}
Let $g$ be a smooth Riemannian metric on the standard marked torus $\T^n$.
Suppose that
\[
  \|r_{a_1,a_2,\ldots, a_n}\|_{\st,g}=\sqrt{a_1^2+a_2^2+\cdots +a_n^2}
\]
for every $(a_1, \ldots, a_n)\in H_{n-1}(\T^n,\R)\cong\R^n$, and that
\[
  \operatorname{Vol}(\T^n,g)=1.
\]
Must $g$ be isometric to a flat metric by a diffeomorphism
isotopic to the identity?
\end{question}

For $n=3$, the metrics constructed in Section
\ref{sec:cubic}, however, satisfy
\[
  \operatorname{Vol}(\T^3,g_{\lambda,s})=\kappa_\lambda^{-1}<1
  \qquad (\lambda>1),
\]
so they do not answer this question negatively. On the other hand, for each fixed $\lambda>1$, each of the metrics $g_{\lambda,s}$ is not even locally rigid with respect to the data (Euclidean stable norm, volume). In \cite{MarquesNevesSun2026}, the authors prove that the flat metric is locally rigid under this data. 

Within the class of metric in Theorem~\ref{thm:generalformula-intro},
the question has a sharp affirmative answer. We give a proof in Proposition~\ref{prop:volume-rigidity} in Appendix~\ref{app:volume-rigidity}.

\subsection{Higher-dimensional prescribed Euclidean stable norm}
\label{sec:higher}

A naive generalization of the construction in Section \ref{sec:cubic} does not work. The same rotation trick fails to yield $\mathcal A_g(z)=|z|$ for $\mathbb R^3$.

\begin{question}
\label{q:higher-cubic}
For $n\ge4$, does there exist a smooth nonflat metric on the standard torus
$\T^n$ whose stable norm on $H_{n-1}(\T^n,\R)$ is exactly
\[
  (a_1,\ldots,a_n)\longmapsto
  \sqrt{a_1^2+\cdots+a_n^2}
\]
under the standard integral identification?
\end{question}

\appendix

\section{Minimizing foliations and differentiability of the stable norm}
\label{app:differentiability}

We record the precise relation between Bangert's foliation condition and the
differentiability theory of the codimension-one stable norm.  The result is
a torus consequence of Auer-Bangert's differentiability theorem
\cite{AuerBangert2006}, Junginger-Gestrich's gap theorem
\cite{JungingerGestrich2007}, and a compactness argument that passes from
rational foliations to irrational directions.

 Let $\Gamma=H_1(\T^n,\Z)$, and let
$I:\Gamma\times H_{n-1}(\T^n,\R)\to\R$ be the intersection pairing.  For
$0\ne\alpha\in H_{n-1}(\T^n,\R)$ set
\begin{equation}\label{eq:Kalpha}
  K(\alpha)=\{k\in\Gamma:I(k,\alpha)=0\}
\end{equation}
and
\begin{equation}\label{eq:Valpha}
  V(\alpha)
  =\{\beta:I(k,\beta)=0\text{ for every }k\in K(\alpha)\}.
\end{equation}
The space $V(\alpha)$ is the smallest linear subspace containing $\alpha$
and spanned by integral classes.

We briefly recall the minimizing-lamination notation used in
\cite{JungingerGestrich2007}.  On the Abelian cover
$\R^n\to\T^n$, let $\mathcal M$ denote the nonzero homologically minimizing
codimension-one boundaries with the Birkhoff property.  Every
$R\in\mathcal M$ has a normalized first invariant
$\alpha_1(R)$ satisfying $\|\alpha_1(R)\|_g=1$, followed possibly by secondary
invariants.  For $\|\alpha\|_g=1$, the set $\mathcal M(\alpha)$ consists of the
currents whose first invariant is $\alpha$ and which have no secondary
invariant; equivalently, they are invariant under every deck transformation
in $K(\alpha)$.  The set $\mathcal M(\alpha)$ is totally ordered and gives a
minimal lamination of $\R^n$.

We use the following four facts from the cited works.  First, Auer and Bangert \cite{AuerBangert2006} prove that
$\|.\|_g|_{V(\alpha)}$ is differentiable at $\alpha$.  Second, if $\alpha$ is
rationally dependent, Junginger-Gestrich \cite{JungingerGestrich2007} proves that $\|.\|_g$ is differentiable
at $\alpha$ in the remaining directions if and only if
$\mathcal M(\alpha)$ has no gaps.  Third, on a torus every element of
$\mathcal M$ has connected support (see \cite{Nguyen2026}.  Finally, the subfamily of minimizing currents whose
supports meet a fixed compact set is compact, and the first invariant is
continuous under this convergence.

\begin{theorem}
\label{thm:foliation-differentiability}
Let $g$ be a smooth Riemannian metric on $\T^n$.  The following statements
are equivalent.
\begin{enumerate}[label=\textup{(\roman*)},leftmargin=2.3em]
\item Every primitive class in $H_{n-1}(\T^n,\Z)$ admits a foliation by
      connected homologically area-minimizing hypersurfaces.
\item The stable norm $\|.\|_{\st,g}$ is differentiable at every nonzero point of
      $H_{n-1}(\T^n,\R)$.
\end{enumerate}
In arbitrary dimension, statement \textup{(i)} may be read in the
codimension-one minimizing current sense.  For $n\le7$ the leaves are smooth
embedded hypersurfaces.
\end{theorem}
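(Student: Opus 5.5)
We prove the two implications separately, using the four facts recalled just before the statement. Throughout, $\alpha$ is normalized so that $\|\alpha\|_g=1$. Differentiability at $\alpha$ is then equivalent to differentiability in the directions of $V(\alpha)$ together with differentiability in the remaining directions.

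\emph{(i) $\Rightarrow$ (ii).} By Auer--Bangert, $\|\cdot\|_g|_{V(\alpha)}$ is differentiable at every $\alpha\neq0$. If $\alpha$ is totally irrational, then $K(\alpha)=0$, so $V(\alpha)$ is the whole space and nothing remains to prove. If $\alpha$ is rationally dependent, Junginger-Gestrich reduces differentiability to showing that $\mathcal M(\alpha)$ has no gaps, i.e.\ that the supports of its elements fill $\R^n$.
\begin{enumerate}[label=\textup{(\alph*)},leftmargin=2.3em]
\item When $\alpha$ is a multiple of a primitive integral class $r$, the foliation provided by (i) lifts to the cover. Its leaves are calibrated, hence minimizing with the Birkhoff property, and invariant under $K(\alpha)$, so they lie in $\mathcal M(\alpha)$. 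Since they fill $\R^n$, there is no gap.
\item When $\alpha$ is rationally dependent but not rational, approximate $\alpha$ by normalized primitive classes $r_j/\|r_j\|$ with $V(r_j)\supset V(\alpha)$, chosen so that $K(r_j)\supset K(\alpha)$ eventually. Through any point $p\in\R^n$, (i) gives a leaf $L_j$ of the foliation for $r_j$. By the compactness fact, a subsequence converges to a minimizing current through $p$ with first invariant $\alpha$ (continuity of the first invariant). Invariance under $K(\alpha)\subset K(r_j)$ passes to the limit, so the limit has no secondary invariant and lies in $\mathcal M(\alpha)$. Hence every point lies in the support of $\mathcal M(\alpha)$, and there are no gaps.
\end{enumerate}
Arranging $K(r_j)\supset K(\alpha)$ requires choosing the rational approximants inside the rational subspace $V(\alpha)$. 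This is possible because $V(\alpha)$ is spanned by integral classes.

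\emph{(ii) $\Rightarrow$ (i).} Take a primitive $r$ and set $\alpha=r/\|r\|_g$. Since the norm is differentiable at $\alpha$ and $\alpha$ is rationally dependent (for $n\ge2$), Junginger-Gestrich shows that $\mathcal M(\alpha)$ has no gaps. Here $K(\alpha)$ has rank $n-1$, so each element of $\mathcal M(\alpha)$ is invariant under a corank-one lattice and descends to $\T^n$ as a compact minimizing hypersurface in a multiple of $r$. The Birkhoff property and the primitivity of $r$ should force the multiplicity to be one. By the third fact, each leaf has connected support. Total ordering of $\mathcal M(\alpha)$ makes the leaves pairwise disjoint, and the absence of gaps makes them cover $\T^n$. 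The result is a foliation, in the current sense in general and by smooth embedded hypersurfaces for $n\le7$ by regularity theory.

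\textbf{Main obstacle.} I expect the hardest step to be the compactness argument in (i) $\Rightarrow$ (ii), case (b). One must ensure that the limit current genuinely has first invariant $\alpha$ and carries no secondary invariant. Equivalently, one must prevent the limit from being a boundary leaf of some strictly larger lamination with a gap. I would handle this by using the total ordering: the limit leaves through the points of a hypothetical gap would themselves lie in $\mathcal M(\alpha)$, which is a contradiction.
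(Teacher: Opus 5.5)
Your proposal is correct and follows the paper's proof essentially step by step. Both use Auer--Bangert on $V(\alpha)$ and lift the given foliation in the rational case. In the rationally dependent case, both approximate by primitive $r_j\in V(\alpha)$, pass $K(\alpha)$-invariance to the compactness limit of leaves through an arbitrary point, and conclude with the Junginger-Gestrich gap criterion. The converse is the gap criterion plus projection by the deck group. One small slip: the approximants must satisfy $r_j\in V(\alpha)$, i.e.\ $V(r_j)\subset V(\alpha)$ rather than $\supset$. That is exactly what gives $K(\alpha)\subset K(r_j)$, and your later sentence correctly uses it.
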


\begin{proof}
Assume first that \textup{(i)} holds.  Fix $\alpha\ne 0$ and normalize it by
$\|\alpha\|_g=1$.

If $\alpha$ is totally irrational, then $V(\alpha)=H_{n-1}(\T^n,\R)$, and the
full differentiability follows directly from Auer-Bangert \cite{AuerBangert2006}.  If the
direction of $\alpha$ is rational, let $r$ be the primitive integral class
on its ray.  The assumed foliation in the class $r$ lifts to a foliation of
$\R^n$ by elements of $\mathcal M(\alpha)$.  Hence
$\mathcal M(\alpha)$ has no gaps, and the Auer-Bangert-Junginger-Gestrich
criterion gives differentiability of $\|.\|_g$ at $\alpha$.

It remains to consider the rationally dependent case where
\[
  1<\dim V(\alpha)<n.
\]
The group $V(\alpha)\cap H_{n-1}(\T^n,\Z)$ is a full, saturated lattice in
$V(\alpha)$, and primitive lattice directions are dense in the projective
space of $V(\alpha)$.  We may therefore choose primitive integral classes
$r_j\in V(\alpha)$ such that
\begin{equation}\label{eq:alphaj-convergence}
  \alpha_j:=\frac{r_j}{\|r_j\|_g}\longrightarrow\alpha.
\end{equation}
By \eqref{eq:Valpha},
\begin{equation}\label{eq:K-inclusion}
  K(\alpha)\subset K(r_j)=K(\alpha_j)
  \qquad\text{for every }j.
\end{equation}

Fix an arbitrary point $x\in\R^n$.  A foliation of $\T^n$ by connected
closed hypersurfaces in a primitive class is a fibration over $S^1$ whose
cohomology class is Poincar\'e dual to that homology class.  On the universal
cover, its leaf parameter changes under a deck transformation $k$ by
$I(k,r_j)$.  Hence every $k\in K(r_j)$ preserves each lifted leaf.  Let
$R_j^x$ be the lifted leaf through $x$, with the orientation for which its
first invariant is $\alpha_j$.  It is a homologically minimizing Birkhoff
current and is invariant under every deck transformation in $K(r_j)$.  In
particular,
\begin{equation}\label{eq:common-periodicity}
  (\tau_k)_\#R_j^x=R_j^x
  \qquad\text{for every }k\in K(\alpha).
\end{equation}

The compactness theorem for minimizing Birkhoff currents meeting the compact
set $\{x\}$ gives, after passage to a subsequence,
\begin{equation}\label{eq:Rjx-limit}
  R_j^x\longrightarrow R^x\in\mathcal M,
  \qquad x\in\operatorname{spt}R^x.
\end{equation}
Continuity of the first invariant and \eqref{eq:alphaj-convergence} imply
\[
  \alpha_1(R^x)=\alpha.
\]
For every $k\in K(\alpha)$, equation
\eqref{eq:common-periodicity} passes to the limit, so
$(\tau_k)_\#R^x=R^x$.  The classification of Birkhoff minimizers then shows
that $R^x$ has no secondary invariant, hence
\[
  R^x\in\mathcal M(\alpha).
\]
Since $x$ was arbitrary, the supports of the currents in
$\mathcal M(\alpha)$ cover $\R^n$.  The set $\mathcal M(\alpha)$ is already
a lamination, so it is a foliation.  Junginger-Gestrich's gap criterion gives
differentiability in all directions outside $V(\alpha)$, while
Auer-Bangert gives differentiability in the directions of $V(\alpha)$.
Thus $\|.\|_g$ is differentiable at $\alpha$.  This proves \textup{(ii)}.

Conversely, assume \textup{(ii)} and let $r$ be a primitive integral class.
Set $\alpha=r/\|r\|_g$.  This is a rational direction, and full
differentiability at $\alpha$ implies, by the gap criterion, that
$\mathcal M(\alpha)$ foliates $\R^n$.  Every leaf is invariant under
$K(r)$, and the quotient by the deck group projects the lamination to a
foliation of $\T^n$ by closed homologically area-minimizing hypersurfaces in
the class $r$.  This proves \textup{(i)}.
\end{proof}

\section{Stable norm and volume rigidity in the cohomogeneity-one class}
\label{app:volume-rigidity}

We record the sharp form of Question \ref{open:volume-rigidity}
within the metric class used in the main construction.

\begin{proposition}
\label{prop:volume-rigidity}
Let
\[
  g=q(t)^2dt^2+h_t,
  \qquad \det h_t=1,
\]
be a metric on $\T^3=\T^2\times S^1$.  If its stable norm on
$H_2(\T^3,\R)$ is the Euclidean norm, then
\[
  \operatorname{Vol}(\T^3,g)\le1.
\]
Equality holds if and only if $g$ is isometric to the unit Euclidean flat
metric by a diffeomorphism isotopic to the identity.
\end{proposition}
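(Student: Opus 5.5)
By Theorem~\ref{thm:generalformula-intro}, the hypothesis says exactly that
\[
  \mathcal A_g(z)=\int_0^1 q(t)\sqrt{z^Th_t^{-1}z}\,dt=|z|
  \qquad\text{for all }z\in\R^2.
\]
The $c$-component is automatically Euclidean, so it imposes no constraint. The volume is $\Vol(\T^3,g)=\int_0^1 q(t)\,dt$, since $dV_g=q\,dx\,dy\,dt$. The plan is to average the identity over directions $z$ and compare with the volume using a pointwise inequality for unimodular $2\times2$ matrices.

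Step 1: averaging. Take $z=(\cos\phi,\sin\phi)$ and average over $\phi\in[0,2\pi)$. Fubini gives
\[
  1=\int_0^1 q(t)\,m(h_t^{-1})\,dt,
  \qquad
  m(P):=\frac1{2\pi}\int_0^{2\pi}\sqrt{u_\phi^TPu_\phi}\,d\phi .
\]

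Step 2: the pointwise bound. For $P$ symmetric positive definite with $\det P=1$, I claim $m(P)\ge1$, with equality iff $P=I$. Diagonalize $P$ with eigenvalues $\mu,\mu^{-1}$. Then
\[
  m(P)=\frac1{2\pi}\int_0^{2\pi}\sqrt{\mu\cos^2\phi+\mu^{-1}\sin^2\phi}\,d\phi .
\]
I would prove the bound in either of two ways. The first is AM--GM/Jensen for the concave logarithm:
\[
  \log m(P)\ \ge\ \frac1{2\pi}\int\tfrac12\log\bigl(\mu\cos^2\phi+\mu^{-1}\sin^2\phi\bigr)\,d\phi .
\]
The standard integral $\frac1{2\pi}\int_0^{2\pi}\log(a\cos^2+b\sin^2)=2\log\frac{\sqrt a+\sqrt b}{2}$ makes the right side equal to $\log\frac{\sqrt\mu+\sqrt{\mu^{-1}}}{2}\ge0$. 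Equality forces $\mu=1$. The second is a geometric argument: $2\pi m(P)$ is the perimeter of the ellipse $P^{1/2}(S^1)$, which has area $\pi$, so the isoperimetric inequality gives $2\pi m(P)\ge2\pi$ with equality iff the ellipse is a circle. I would use the isoperimetric version as the cleaner one. Combining the two steps,
\[
  1=\int q\,m(h_t^{-1})\,dt\ \ge\ \int q\,dt=\Vol ,
\]
which is the inequality.

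Step 3: the equality case. If $\Vol=1$, then since $q>0$ and $m(h_t^{-1})\ge1$ pointwise, continuity gives $m(h_t^{-1})=1$ for all $t$. Hence $h_t=I$ for all $t$, and $g=q(t)^2dt^2+dx^2+dy^2$ with $\int q=1$. Exactly as at the end of the proof of Corollary~\ref{cor:sinusoidal-family}, the reparametrization $\tau(t)=\int_0^tq$ satisfies $\tau(t+1)=\tau(t)+1$. It therefore descends to a diffeomorphism of $S^1$ isotopic to the identity (via the straight-line homotopy of lifts), and it pulls back the unit flat metric to $g$. For the converse, the unit flat metric has Euclidean stable norm and volume $1$, and pulling back by an isotopic diffeomorphism preserves both, since the homology action is trivial.

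The only delicate point is Step 2 together with its rigidity. The main thing to verify is that Fubini is legitimate and that $m$ is evaluated on $h_t^{-1}$, which is still unimodular; both are routine. The rest is bookkeeping.
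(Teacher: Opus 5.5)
Your proof is correct and follows the paper's own argument step for step. Like the paper, you average $\mathcal A_g(z)=|z|$ over the unit circle and identify $2\pi m(h_t^{-1})$ (via Cauchy's formula) with the perimeter of an ellipse of area $\pi$. You then apply the isoperimetric inequality and, in the equality case, reparametrize by $\tau(t)=\int_0^t q$. Your alternative Jensen-plus-log-integral proof of $m(P)\ge 1$ is also valid and gives the explicit bound $m(P)\ge(\sqrt\mu+\sqrt{\mu^{-1}})/2$, but it is not needed.
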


\begin{proof}
By Theorem~\ref{thm:generalformula-intro}, the Euclidean stable-norm hypothesis
is equivalent to
\begin{equation}\label{eq:Ag-euclidean-open}
  \mathcal A_g(z)
  =\int_0^1 q(t)\sqrt{z^Th_t^{-1}z}\,dt
  =|z|
  \qquad\text{for every }z\in\R^2.
\end{equation}
For each $t$, let
\[
  E_t=\{v\in\R^2:h_t(v,v)\le1\}
\]
be the horizontal unit ellipse.  Since $\det h_t=1$, the Euclidean area of
$E_t$ is $\pi$, and its support function is
\[
  h_{E_t}(z)=\sqrt{z^Th_t^{-1}z}.
\]
Average \eqref{eq:Ag-euclidean-open} over the Euclidean unit circle.  By
Fubini's theorem and Cauchy's perimeter formula, we have
\begin{align*}
  1
  &=\frac1{2\pi}\int_0^{2\pi}
    \mathcal A_g((\cos\theta,\sin\theta))\,d\theta \\
  &=\int_0^1 q(t)
    \frac{\operatorname{Per}(E_t)}{2\pi}\,dt.
\end{align*}
The planar isoperimetric inequality gives
$\operatorname{Per}(E_t)\ge2\pi$, with equality precisely when $E_t$ is
the unit disk.  Therefore
\[
  1\ge\int_0^1q(t)\,dt
  =\operatorname{Vol}(\T^3,g).
\]
If equality holds, then $q(t)>0$ forces
$\operatorname{Per}(E_t)=2\pi$ for every $t$.  Hence $E_t$ is the unit disk
and $h_t\equiv I$.  The metric is consequently
\[
  g=q(t)^2dt^2+dx^2+dy^2.
\]
Since $\int_0^1q(t)\,dt=1$, the function
\[
  \tau(t)=\int_0^tq(u)\,du
\]
defines an orientation-preserving circle diffeomorphism isotopic to the
identity, and in the coordinates $(x,y,\tau)$ one has
\[
  g=dx^2+dy^2+d\tau^2.
\]
Conversely, the unit Euclidean flat metric clearly realizes equality. This completes the proof.
\end{proof}

\section{Rigidity of the affine foliation condition}
\label{app:affine}

We now fix the standard affine structure on
\(
  \T^n=\R^n/\Z^n.
\)
For a primitive vector \(p\in\Z^n\), let
\begin{equation}\label{eq:affine-foliation}
  \mathcal F_p
  :=\bigl\{\Sigma_{p,s}:p\cdot x=s\pmod 1\bigr\}_{s\in S^1}
\end{equation}
be the standard affine foliation in the class Poincar\'e dual to \(p\).

\begin{theorem}
\label{thm:affine-rigidity}
Let \(n\ge2\), and let \(g\) be a smooth Riemannian metric on the standard
affine torus \(\T^n\).  Suppose that for every primitive covector
\(p\in\Z^n\), every leaf of the affine foliation \(\mathcal F_p\) is
\(g\)-minimal.  Then the coefficients of \(g\) are constant in the
standard affine coordinates.  In particular, \(g\) is flat.
\end{theorem}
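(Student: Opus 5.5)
The plan is to turn minimality of all rational affine hyperplanes into a linear PDE on the universal cover, and then use periodicity to force $x$-independence. Work on $\R^n$ with the lifted, $\Z^n$-periodic metric $g=(g_{ij}(x))$. For a covector $p\ne0$, the leaves $\{p\cdot x=s\}$ are the level sets of $u(x)=p\cdot x$, with $\nabla_g u=g^{-1}p$. By \eqref{eq:div-calibration}, the leaves are minimal if and only if the unit normal $\nu_p=g^{-1}p/\sqrt{p^Tg^{-1}p}$ is divergence free. In coordinates this reads
\[
  \sum_i\partial_{x_i}\!\left(\frac{\sqrt{\det g}\,g^{ij}p_j}{\sqrt{p^Tg^{-1}p}}\right)=0 .
\]
The expression is $0$-homogeneous in $p$, so it holds for all rational $p$ by scaling, and then for all real $p\ne0$ by continuity. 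Introduce the Lagrangian
\[
  F(x,p)=\sqrt{\det g(x)}\,\sqrt{p^Tg(x)^{-1}p}.
\]
The condition then becomes the single equation $\sum_i\partial_{x_i}\partial_{p_i}F(x,p)=0$ for all $x$ and all $p\ne0$. This is exactly the setting of Hangan's characterization \cite{Hangan1996} of metrics on $\R^n$ for which every hyperplane is minimal.

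The key step is to show that $F$ does not depend on $x$. Hangan's theorem describes the admissible $F$ explicitly: the $x$-dependence enters only through terms that grow at least affinely in $x$, on top of an $x$-independent convex norm in $p$. Since our $F$ is $\Z^n$-periodic in $x$, and hence bounded in $x$ for each fixed $p$, those terms must vanish, leaving $F(x,p)=F_0(p)$. If I want to avoid depending on the precise form of Hangan's list, I would instead argue directly on the torus. For fixed $p$, the field $V_p(x)=\nabla_pF(x,p)$ is divergence free on $\T^n$, and by Euler's relation $p\cdot V_p=F$. Since $V_p$ is divergence free and periodic, its flux $\int p\cdot V_p$ across the leaves $\{p\cdot x=s\}$ is independent of $s$. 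Combined with the divergence identity differentiated in $p$, this says that every Fourier mode $e^{2\pi i k\cdot x}$ with $k\ne0$ of $F$ satisfies a homogeneous first-order equation of the form $k\cdot\nabla_p\widehat F_k(p)=0$. Together with $1$-homogeneity in $p$ and the smoothness of $\widehat F_k$ away from $0$, this should force $\widehat F_k\equiv0$: $\widehat F_k$ is constant along lines in direction $k$, and $1$-homogeneity plus continuity on $\R^n\setminus\{0\}$ then makes it vanish. I expect this step, getting $x$-independence rigorously either by quoting Hangan correctly or by the Fourier argument, to be the main obstacle.

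Once $F$ is independent of $x$, the tensor $A=\sqrt{\det g}\,g^{-1}$ is constant, because $F(x,\cdot)^2=p^TAp$ recovers $A$. Its determinant is $\det A=(\det g)^{n/2-1}$. For $n\ge3$ this shows that $\det g$ is constant, so $g^{-1}$ and hence $g$ are constant, and $g$ is flat.

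For $n=2$, $A$ only determines the conformal class, so $g=e^{2\varphi(x)}g_0$ with $g_0$ constant. Here minimality means that every affine line of rational slope is a $g$-geodesic. For a conformal metric, the geodesic curvature of a $g_0$-geodesic is $e^{-\varphi}\,\partial_{N}\varphi$, where $N$ is the $g_0$-unit normal. Requiring $\partial_N\varphi=0$ for a dense set of normal directions gives $\nabla\varphi\equiv0$. So $\varphi$ is constant and $g$ is again constant, which completes the proof.
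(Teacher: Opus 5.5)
Your approach is sound and takes a different route from the paper's, but two steps need repair.

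\textbf{Comparison with the paper.} The paper never works with the Lagrangian $F$. It quotes Hangan's local result that $G=|g|^{-2/(n+1)}g$ satisfies the Euclidean Killing equation $\partial_iG_{jk}+\partial_jG_{ki}+\partial_kG_{ij}=0$. It notes that $G(v,v)$ is then constant along every straight line in direction $v$, and uses density of totally irrational lines in $\T^n$ plus polarization to conclude that $G$, hence $g$, is constant. You instead keep the condition in the linear, constant-coefficient form $\sum_i\partial_{x_i}\partial_{p_i}F=0$ and expand in $x$. Integrating by parts against $e^{-2\pi i k\cdot x}$ gives $k\cdot\nabla_p\widehat F_k(p)=0$ for all $p\ne0$ immediately; the flux remark is not needed. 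Your argument is self-contained, avoids Hangan's classification, and works verbatim for any smooth, even, $1$-homogeneous periodic area integrand. The paper's argument instead exhibits the local mechanism, a quadratic first integral of the straight-line flow, which exists on $\R^n$ without periodicity. Your extension from primitive $p$ to all real $p$ by $0$-homogeneity and continuity is also more direct than the compactness argument the paper alludes to. If you prefer the Hangan route, your paraphrase of his theorem is not accurate. The usable statement is the Killing equation for $G$, whose solutions are polynomials of degree at most two in $x$, so periodicity forces constancy.

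\textbf{First repair: vanishing of $\widehat F_k$ for $k\ne0$.} This does not follow from $1$-homogeneity and continuity. In $\R^2$ with $k=e_1$, the function $p\mapsto|p_2|$ is continuous, absolutely $1$-homogeneous, and constant along every line in direction $k$, yet nonzero. Likewise $p\mapsto p_2$ is smooth and positively homogeneous with the same property. What kills $\widehat F_k$ is differentiability at $p=k$ together with evenness, $\widehat F_k(-p)=\widehat F_k(p)$, which is inherited from $F(x,-p)=F(x,p)$. For $p$ not parallel to $k$ and $t\ne0$,
\[
  \widehat F_k(p)=\widehat F_k(p+tk)=|t|\,\widehat F_k\bigl(k+p/t\bigr),
\]
so $s\mapsto\widehat F_k(k+sp)$ equals $|s|\,\widehat F_k(p)$ for $s\ne0$. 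Differentiability at $s=0$ forces $\widehat F_k(p)=0$, and continuity extends this to all $p\ne0$.

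\textbf{Second repair: a computational slip.} In fact $F^2=(\det g)\,p^Tg^{-1}p$, so the tensor recovered from $F$ is $B=(\det g)\,g^{-1}$, not $\sqrt{\det g}\,g^{-1}$. Since $\det B=(\det g)^{n-1}$, constancy of $B$ gives constancy of $\det g$, and then of $g=(\det g)B^{-1}$, for every $n\ge2$. Your separate conformal argument for $n=2$ is therefore valid but unnecessary.
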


 The proof combines Hangan's local
classification of metrics with minimal affine hyperplanes in \cite{Hangan1996} with a global
periodicity argument.  Hangan first derived the differential system imposed
by the vanishing of the mean curvature of all affine hyperplanes and then
integrated it in terms of Euclidean Killing $2$-tensors.

\begin{proof}
Lift \(g\) to a \(\Z^n\)-periodic metric (still denoted by $g$) on \(\R^n\).  For a nonzero constant vector \(p\in \R ^n\), let
\[
  u_p(x)=p\cdot x.
\]
By the assumption, using compactness property of stable minimal hypersurfaces, every affine hyperplane is minimal. The mean curvature of the level hyperplane through \(x\), computed with the
unit normal \(\nabla^g u_p/|\nabla^g u_p|_g\), is
\[
  H_g(x,p)
  =\operatorname{div}_g
   \left(\frac{\nabla^g u_p}{|\nabla^g u_p|_g}\right)=0.
\]

Write
\[
  g=g_{ij}(x)\,dx^i dx^j,
  \qquad
  |g|:=\det(g_{ij}).
\]
Hangan's correspondence now applies.  In the notation of
\cite{Hangan1996}, the positive-definite symmetric tensor
\begin{equation}\label{eq:Hangan-normalization}
  G:=|g|^{-\frac{2}{n+1}}g
\end{equation}
satisfies the Euclidean Killing-tensor equation
\begin{equation}\label{eq:flat-Killing-equation}
  \partial_iG_{jk}+\partial_jG_{ki}+\partial_kG_{ij}=0.
\end{equation}

The Killing equation has a direct dynamical meaning.  For every constant
vector \(v\in\R^n\) and every Euclidean straight line
\(\gamma(t)=x+tv\), equation \eqref{eq:flat-Killing-equation} gives
\begin{equation}\label{eq:quadratic-integral}
  \frac{d}{dt}G_{x+tv}(v,v)
  =(\partial_vG)_{x+tv}(v,v)=0.
\end{equation}
Thus \(G(v,v)\) is a quadratic first integral of the standard straight-line
flow.

Choose \(v\) whose coordinates are rationally independent.  The projected
line \(x+tv\pmod{\Z^n}\) is dense in \(\T^n\).  By
\eqref{eq:quadratic-integral}, the continuous function
\(G_x(v,v) \)
is constant along this dense orbit and hence constant on all of \(\T^n\). Since totally irrational vectors are dense in \(\R^n\), we have that for fixed
\(x,y\in\T^n\), the quadratic polynomial
\[
  v\longmapsto G_x(v,v)-G_y(v,v)
\]
vanishes on a dense subset of \(\R^n\) and hence vanishes identically.
Polarization gives
\[
  G_x(v,w)=G_y(v,w)
  \qquad
  \text{for all }x,y\in\T^n
  \text{ and }v,w\in\R^n.
\]
Consequently, \(G\) has constant coefficients.  This implies that \(g=|G|^{\frac{2}{1-n}}G\) also has constant coefficients and is flat. This completes the proof.
\end{proof}

\end{document}